\documentclass[a4paper,11pt,reqno]{amsart}
\usepackage[left=26mm,right=26mm,top=30mm,bottom=30mm]{geometry}

\usepackage{amssymb}
\usepackage[shortlabels]{enumitem}
\usepackage{bm}
\usepackage{mathtools}
\usepackage{mathrsfs}
\usepackage{array} 
\usepackage{xcolor} 
\usepackage{xparse}
\usepackage[sort&compress,numbers]{natbib}
\usepackage{caption}
\usepackage{subcaption}
\usepackage{booktabs}
\usepackage{circuitikz}

\usepackage{pstricks}
\usepackage{pst-node}
\usepackage{tikz}
\usetikzlibrary{arrows,chains,matrix,positioning,scopes}
\tikzstyle{element}=[rectangle,draw,fill=white, line width=1pt]
\tikzstyle{terminal}=[circle,draw, scale=0.3, line width=1pt,red]
\tikzstyle{fleche}=[->,>=stealth', very thick]
\tikzstyle{fleche1}=[->,>=stealth', very thick, red]
\usepackage{placeins}

\usepackage{bbm}
\def\1{\mathbbm{1}}

\def\la{\lambda}

\def\t{\tau}

\def\calB{{\calB}}

\def\calT{{\mathcal{T}}}

\def\calB{{\mathcal{B}}}

\def\calA{{\mathcal{A}}}

\def\calL{{\mathcal{L}}}
\def\R{\mathbf{R}}

\def\C{\mathbf{C}}

\def\N{\mathbf N}

\def\F{\mathbb F}

\newtheorem{theorem}{Theorem}[section]
\theoremstyle{plain}
\newtheorem{definition}{Definition}[section]
\newtheorem{lemma}{Lemma}[section]
\newtheorem{proposition}{Proposition}[section]

\newtheorem{remark}{Remark}[section]
\numberwithin{equation}{section}

\newtheorem{mainassumptions}{Assumption}[section]

\makeatletter
\newcommand\makebig[2]{%
	\@xp\newcommand\@xp*\csname#1\endcsname{\bBigg@{#2}}%
	\@xp\newcommand\@xp*\csname#1l\endcsname{\@xp\mathopen\csname#1\endcsname}%
	\@xp\newcommand\@xp*\csname#1r\endcsname{\@xp\mathclose\csname#1\endcsname}%
}
\makeatother

\makebig{biggg} {3.0}
\makebig{Biggg} {3.5}
\makebig{bigggg}{4.0}
\makebig{Bigggg}{4.5}
\makebig{biggggg}{5}
\makebig{Biggggg}{5.5}
\makebig{bigggggg}{6}
\makebig{Bigggggg}{6.5}

\makeatletter
\newcommand{\doublehat}[1]{%
	\begingroup%
	\let\macc@kerna\z@%
	\let\macc@kernb\z@%
	\let\macc@nucleus\@empty%
	\hat{\mathchoice%
		{\raisebox{.2ex}{\vphantom{\ensuremath{\displaystyle #1}}}}%
		{\raisebox{.2ex}{\vphantom{\ensuremath{\textstyle #1}}}}%
		{\raisebox{.16ex}{\vphantom{\ensuremath{\scriptstyle #1}}}}%
		{\raisebox{.14ex}{\vphantom{\ensuremath{\scriptscriptstyle #1}}}}%
		\smash{\hat{#1}}}%
	\endgroup%
}
\makeatother

\NewCommandCopy{\ordinaryexists}{\exists}
\RenewDocumentCommand{\exists}{}{\mathop{{}\ordinaryexists}}
\begin{document}
	\title{Well-posedness of boundary control systems and application to ISS for coupled heat equations with boundary disturbances and delays}
	\author{Yassine El Gantouh $^{\tt1,\ast}$, Jun Zheng $^{\tt1}$, and Guchuan Zhu $^{\tt2}$}
	
	\thanks{
		\vspace{-1em}\newline\noindent
		{\sc MSC2020}:	93D20, 93C05, 93A15, 35B09, 47B65.
		\newline\noindent
		{\sc Keywords}: {Boundary control systems, continuous dependence,  input-to-state stability, coupled heat equations, state delays
		}
		\newline\noindent 
		$^{\tt1}$ School of Mathematics, Southwest Jiaotong University, Chengdu, 611756, Sichuan, China.
     	$^{\tt2}$  Department of Electrical Engineering, Polytechnique Montréal, Montreal, H3T 1J4, Quebec,
     	Canada.
		\newline\noindent 
		{\sc Emails}:
		{\tt elgantouhyassine@gmail.com}, ~{\tt zhengjun2014@aliyun.com}, ~{\tt guchuan.zhu@polymtl.ca}.
		\newline\noindent
		$^{\ast}$ Corresponding author.
	}
	
	\begin{abstract}
    This paper studies the existence of solutions and, in particular, the well‑posedness of a class of boundary control systems. Our main result provides explicit and verifiable conditions on the system data that guarantee continuous dependence of solutions on the initial data and $L^p$-inputs. The proof relies on a new boundedness estimate for the input/output maps of linear time‑invariant infinite‑dimensional systems with unbounded control and observation operators. The developed technique is applied to derive specific conditions for the exponential input‑to‑state stability of boundary‑coupled heat equations with boundary disturbances and time‑delays.
	\end{abstract}
	\maketitle
	
	\pagestyle{myheadings} \thispagestyle{plain} \markboth{\sc  Y.\ EL Gantouh, J.\ Zheng, G.\ Zhu}{\sc}

	\section{Introduction}
     The well-posedness and long-term behavior of boundary control systems have been extensively investigated in the literature. The pioneering work of \cite{Fattorini1968} provided the first systematic treatment of this class of infinite-dimensional systems, establishing a foundation that has been built upon by numerous researchers. Subsequent significant contributions, such as those by \cite[Sect. 3.3]{Curtain1995}, \cite{Emirsajlow2000}, \cite[App. 3B]{Lasiecka2000}, \cite[Chap. 11]{Jacob2012}, and \cite[Chap. 10]{TW}, have further enriched the theory. A central element emerging from this body of work is the formulation of such systems as abstract evolution equations, which provides a powerful framework for analysis.
     \vspace{.1cm}
     
     This established framework typically models linear time-invariant (LTI) infinite-dimensional systems as an abstract boundary control problem of the form:
     \begin{align}\label{BCS}
     	\begin{cases}
     		\dot{z}(t)= \tilde{A}z(t),& t> 0,\qquad z(0)=z_0,\\
     		\Upsilon z(t)=Ku(t), & t> 0,
     	\end{cases}
     \end{align}
     where the state variable $ z(\cdot) $ evolves in a Banach space $ X $ with the norm $\|\cdot\|$, $z_0\in X$ is the initial state, and the input $u: \mathbb{R}_+ \to U$ is a locally $p$-integrable function for some $p \in [1,\infty]$. The system's internal dynamics are governed by the closed and densely defined operator $\tilde{A}: {D}(\tilde{A}) \subset X \to X$. The boundary conditions are expressed through a continuous linear operator $ \Upsilon: {D}(\tilde{A}) \to V$, and the control operator $K: U \to V$ maps inputs to actions on the boundary. The spaces $X$, $V$, and $U$ represent the state, boundary, and input spaces, respectively.
     \vspace{.1cm}
     
     While this model has proven its effectiveness, it is not sufficiently general to capture certain complex physical phenomena, such as those involving dynamic boundary feedback or complex coupling. In this paper, we extend the framework of the abstract boundary control system \eqref{BCS} by replacing the single boundary operator $\Upsilon$ with a difference of two operators $G,\Gamma, {D}(\tilde{A}) \to V$, i.e., $\Upsilon = G-\Gamma$, where $\Gamma$ is not necessarily closed or even closable. This leads to a more general initial/boundary-value problem:
     \begin{align}\label{Main-system}
     	\begin{cases}
     		\dot{z}(t) = \tilde{A} z(t), & t > 0, \quad z(0)=z_0, \\
     		G z(t) = \Gamma z(t) + K u(t), & t > 0.
     	\end{cases}
     \end{align}
     \vspace{.1cm}
     
     From a PDE point of view, continuous dependence on data--the essence of well‑posedness--is a fundamental property. It is essential not only for establishing existence, uniqueness, and regularity of solutions to evolution equations, but also for analyzing stability and robustness concepts such as input‑to‑state stability (ISS) \cite{JaNPS,JaSZ,Karafyllis2019a,Lhachemi2019,Andri,SZZ,CZZ}. Therefore, our main objective is to establish direct and explicit conditions on the operators $\tilde{A},G,\Gamma$, and $K$ such that the  system  \eqref{Main-system} is well-posed in the following sense: for any initial datum $z_0\in X$ and any input $u\in {L}^p_{loc}(\R_+;U)$, system \eqref{Main-system} admits a unique solution $z \in C(\R_+;X)$ and for every $\tau>0$, there exists a constant $c_{\tau,p}>0$ such that
     \begin{align*}
     	\Vert z(\t)\Vert \le c_{\tau,p}\left(\Vert z_0\Vert +\Vert u\Vert_{L^p(0,\t;U)}\right).
     \end{align*}
     \vspace{.1cm}
     
     When the operator $\Gamma$ is bounded, the well‑posedness of \eqref{Main-system} follows directly from classical results on the boundary control system \eqref{BCS}. However, the case where the operator $\Gamma$ is unbounded and non‑closable is significantly more challenging. Prior works (see, e.g., \cite{El2, HMR})  have addressed this challenge by employing the feedback theory developed in \cite{Salam,Sta,WF}. This approach reformulates \eqref{Main-system} as an input/output system interconnected with a static feedback operator.  Within this framework, well‑posedness (in the sense of \cite{WF}) is contingent upon the following three abstract conditions: \begin{itemize}
     	\item[(a)] an abstract operator triple derived from $\tilde{A}$, $G$, and $\Gamma$ is well-posed; 
     	\item[(b)] the transfer function of this abstract triple is proper and has a strong limit at $+\infty$ (along the real axis); and  
     	\item[(c)] the static feedback operator is admissible.
     \end{itemize}
     A key limitation of this method is its dependency on conditions that are not expressed in terms of the original system data. Moreover, verifying these abstract conditions is not a trivial task, particularly in a Banach space setting. For instance, establishing condition (a) requires proving the boundedness of the associated state/output, input/state, and input/output maps \cite{Salam,Sta,Tucsnak2014,WF}. This, in turn, presents a serious obstacle, as boundedness often depends on intrinsic properties of the underlying semigroup that are rarely available in explicit form.
     \vspace{.1cm}
     
     Motivated by these limitations, this paper investigates the well-posedness of the initial/boundary value control system \eqref{Main-system} without directly invoking abstract admissibility conditions. Our main result, Theorem \ref{Main1}, provides explicit and verifiable criteria for the well-posedness of system \eqref{Main-system}. A crucial ingredient in its proof is a novel boundedness estimate for input/output maps, presented in Theorem \ref{S5.P1}. The derivation of this estimate exploits the positivity of the unforced dynamics (i.e., $u \equiv 0$) combined with the boundedness and positivity of solutions to a related elliptic problem. Notably, Theorem \ref{S5.P1} extends the method of \cite{ChM} to the Banach space setting. Furthermore, we show that our assumptions are sufficient to ensure that the abstract conditions (b) and (c) are also satisfied, thereby establishing the well-posedness of system \eqref{Main-system}.
     
     The rest of the paper is organized as follows. Section \ref{Sec:2} presents our main results, {namely,  Theorem  \ref{Main1} and Theorem \ref{S5.P1}}. In Section \ref{Sec:3}, we review relevant results on positive LTI systems. Section \ref{Sec:4} {presents} the proofs of {the} main theorems. In Section \ref{Sec:5}, we apply our results to derive specific conditions for the exponential ISS of a system of coupled heat equations with time delays and boundary disturbances. Some concluding remarks are provided in Section \ref{Sec:6}. Finally, Appendix \ref{App} provides two technical lemmas used in the ISS analysis of the example treated in Section \ref{Sec:5}.
     \vspace{.1cm}
     
     \textbf{Notation.} Throughout this paper, $\mathbb{C}$, $\mathbb{R}$, $\mathbb{R}_+$, and $\mathbb{N}$ denote the sets of complex numbers, real numbers, positive real numbers, and natural numbers, respectively. Let $E$ be a Banach space, $p\in [1,\infty]$, and $J \subset \mathbb{R}$ an interval. We denote by $L^p(J;E)$ the space of Bochner measurable, $p$-integrable functions $f \colon J \to E$;  $C(J;E)$ the space of continuous $E$-valued functions on $J$; and  $W^{m,p}(J;E)$ the $E$-valued Sobolev space of order $m\in \N$, consisting of all $f\in L^p(J;E)$ whose weak derivatives $\partial_x^{(k)}f$ belong to $L^p(J;E)$ for all $k=1,2,\ldots,m$.
     \vspace{.1cm}
     
     Let $E$ and $F$ be Banach spaces. The space of bounded linear operators from $E$ to $F$ is denoted by $\mathcal{L}(E,F)$, and we write $\mathcal{L}(E)=\mathcal{L}(E,E)$. For $P\in\mathcal{L}(E,F)$, its restriction to a subspace $Z\subset E$ is written $P|_Z$; $\operatorname{ran}P$ and $\ker P$ stand for its range and kernel, respectively. The identity operator is denoted by $I$; the underlying space will be clear from the context.
     \vspace{.1cm}
     
     A \emph{Banach lattice} $(E,\le)$ is a partially ordered Banach space such that every pair $x,y\in E$ admits a supremum $x\vee y$ and, for all $x,y,z\in E$ and $\alpha\ge0$,
     \begin{itemize}
     	\item 	$x \leq y $ implies $(x + z \leq y + z \quad \text{and} \quad \alpha x \leq \alpha y)$.
     	\item $|x| \leq |y|$ implies $\|x\| \leq \|y\|$.
     \end{itemize}
     Here, the absolute value is defined by $|x| = x \vee (-x)$. An element $x\in E$ is \emph{positive} if $x\ge0$; the set of positive elements forms the \emph{positive cone} $E_+$. This notation is also used for general ordered Banach spaces. The topological dual $E'$ equipped with the dual norm and order is again a Banach lattice. An operator $P\in\mathcal{L}(E,F)$ is called \emph{positive} if $P E_+\subset F_+$, and the collection of such operators is denoted $\mathcal{L}_+(E,F)$. For further details we refer to \cite{CHARALAMBOS} or \cite{Schaf}.

	\section{Main results}\label{Sec:2}
     This section presents the main results of the paper. We begin by recalling some basic definitions. Let $A$ be the generator of a $C_0$-semigroup $T:= (T(t))_{t \geq 0}$ on a Banach space $X$. The uniform growth bound of $T$ is defined as $\omega_0(T):=\inf\{t>0: t^{-1}\log(\Vert T(t)\Vert)\}$. The resolvent set of $A$ is defined as
      \begin{eqnarray*}
     \rho(A) := \{\lambda\in \mathbb{C}:\lambda I-A \text{ is bijective and }
     		(\lambda I - A)^{-1} \in \mathcal{L}(X)\},
     \end{eqnarray*}
     and the spectrum is $\sigma(A) := \mathbb{C} \setminus \rho(A)$. For $\lambda \in \rho(A)$, the operator $R(\lambda,A) := (\lambda I-A)^{-1}$ is called the resolvent operator of $A$ at $\lambda$. The spectral bound of $A$ is defined as
     \begin{align*}
     	s(A) := \sup\{\operatorname{Re} \lambda : \lambda \in \sigma(A)\},
     \end{align*}
     with the convention that $s(A)= -\infty$ if $\sigma(A) = \emptyset$. For the generator $A$ of a $C_0$-semigroup $T$, it holds that $-\infty\le s(A) \le \omega_0(T)<+\infty $ (see, e.g., \cite{engel2000}). The semigroup $T$ is called uniformly exponentially stable if $\omega_0(T) < 0$. 
     \vspace{.1cm}
     
     We now introduce the following standard assumptions.   
     \begin{mainassumptions}\label{Assp1}
     	\begin{itemize}
     		\item[(i)] $A:=\tilde{A}\vert_{\ker G}$ generates a $C_0$-semigroup $T$ on $X$; and 
     		\item[(ii)] $G$ is surjective.
     	\end{itemize}
     \end{mainassumptions}
     Under Assumption \ref{Assp1}, the domain of $\tilde{A}$ admits the decomposition
     \begin{eqnarray}\label{decomposition}
     	{D}(\tilde{A})={D}(A) \oplus \ker (\la I-\tilde{A}),\qquad \la\in \rho(A).
     \end{eqnarray}
     Moreover, the Dirichlet operator associated with $(\tilde{A},G)$,
     \begin{eqnarray*}
     	D_\lambda:=\left(G\vert_{\ker(\lambda I-\tilde{A})}\right)^{-1}: V\to X, \quad \lambda\in\rho(A),
     \end{eqnarray*} 
     exists and is bounded; see \cite[Lems. 1.2 and 1.3]{Gr}. We note that $D_\lambda$ is the solution operator of the abstract elliptic problem
     \begin{align*}
     	\tilde{A}x=\lambda x,\qquad Gx=v,\qquad \la\in \C,
     \end{align*}
     see \cite[Prop. 2.8, eqn. 2.20]{Salam}. For $\lambda\in \rho(A)$, define
     \begin{eqnarray}\label{boundary-control}
     	B:=(\lambda I-A_{-1})D_\lambda\in\mathcal{L}(V,X_{-1}^A).
     \end{eqnarray}
     One can verify that $B$ is independent of $\lambda$ and satisfies $\operatorname{ran} B \cap X = \{0\}$ and
     \begin{eqnarray}\label{representation}
     	(\tilde{A}-A_{-1})|_{{D}(\tilde{A})}=B G.
     \end{eqnarray}
     Here and in the following, $X_{-1}^A$ denotes the extrapolation space associated with $A$, and $A_{-1}$ (with domain ${D}(A_{-1})=X$) is the generator of the extrapolated semigroup $T_{-1} := (T_{-1}(t))_{t \ge 0}$, cf. \cite[Chap. II.5]{engel2000}.
     \vspace{.1cm}
     
     Throughout the paper, we assume that $X$ and $V$ are Banach lattices and that $T$ is a positive semigroup, i.e., $T(t)X_+ \subset X_+$ for all $t \ge 0$. To extend the notion of positivity to the extrapolation space $X_{-1}^A$, we follow \cite[Def. 2.1]{BJVW} and define the positive cone $X_{-1,+}^A$ as the closure of $X_+$ in the norm $\|\cdot\|_{-1}^A$. Then, we have $X_+ \subset X_{-1,+}^A$. Moreover, if $X$ is a real Banach lattice, then by \cite[Prop. 2.3]{BJVW} we have $X_+ = X \cap X_{-1,+}^A$. For further details, see \cite[Sect. 2.2]{SGPS}.
     \vspace{.1cm}
     
     In addition to Assumption \ref{Assp1}, we also introduce the following assumptions.
     \begin{mainassumptions}\label{Assp2}
     	\begin{itemize}
     		\item[(i)] $D_\la$ is positive for all sufficiently large $\la\in \R$.
     		\item[(ii)] $\Gamma$ is positive, and for some $\alpha>0$ there exists $\gamma_\alpha>0$ such that
     		\begin{eqnarray*}
     			\int_{0}^{\alpha}\Vert \Gamma T(t)x\Vert^p_V \mathrm{d}t\leq \gamma_\alpha^p\Vert x\Vert^p_X, \quad \forall x\in {D}_+(A),
     		\end{eqnarray*}
     		where ${D}_+(A):={D}(A)\cap X_+$ and $p\in [1,\infty)$.
     		\item[(iii)] There exists $\la_0>\omega_0(T)$ such that
     		\begin{eqnarray*}
     			\sup_{\la\ge \la_0}\Vert \la D_\la v\Vert_X<\infty, \quad \forall v\in V.
     		\end{eqnarray*}
     		\item[(iv)] For all $v\in V$ and $u^*\in V'$:
     		\begin{eqnarray*}
     			\lim_{\R\ni \la\to +\infty }\langle \Gamma D_\la v,u^*\rangle_{V,V'}=0,
     		\end{eqnarray*}
     		where $\langle\cdot,\cdot\rangle_{V,V'}$ {stands} for the duality pairing between $V$ and $V'$.
     	\end{itemize}
     \end{mainassumptions}
     \begin{remark}\label{Further-details}
     	It should be noted that Assumption \ref{Assp2}-(iii) holds only if the state-space $X$ is nonreflexive. Indeed, if $X$ were reflexive, the unit ball would be weakly sequentially compact. Consequently, a subsequence $(n_k D_{n_k} v)_k$ would converge weakly in $X$. However, the whole sequence $(n D_n v)_n$ converges to $B v$ in $X_{-1}^A$. This implies $B v \in X$, and by the closed graph theorem, $B \in \mathcal{L}(V, X)$, contradicting the fact that $\operatorname{ran} B \cap X = \{0\}$.
     \end{remark}
     
     The following is our {first} main result, which {provides} direct and explicit conditions on the operators in \eqref{Main-system} that guarantee its well-posedness.
     \begin{theorem}\label{Main1}
     	Let $U$ be a Banach space, $K \in \mathcal{L}(U,V)$, and $p\in [1,\infty)$. Let Assumptions \ref{Assp1}-\ref{Assp2} be satisfied and $q\in [p,\infty]$. Then, the operator $\calA:{D}(\calA)\subset X \to X$ defined by 
     	\begin{eqnarray}\label{def:A}
     		\calA:= \tilde{A}, \qquad {D}(\calA):= \left\{x\in {D}(\tilde{A}):\ G x=\Gamma x\right\},
     	\end{eqnarray}
     	generates a positive $C_0$-semigroup $\calT:=(\calT(t))_{t\ge 0}$ on $X$. Moreover, for every initial condition {$z_0\in X$} and every input function $u\in {L}^q_{loc}(\R_+,U)$, the initial/boundary-value control problem \eqref{Main-system} has a unique mild solution $z(\cdot)\in { C}(\R_+;X)$ given by 
     	\begin{eqnarray}\label{variation1}
     		z(t)=\calT(t){z_0}+\int_{0}^{t} \calT_{-1}(t-s)B K u(s)\ {\mathrm{d}s}, \quad t\ge 0.
     	\end{eqnarray}
     \end{theorem}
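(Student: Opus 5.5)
The plan is to recast \eqref{Main-system} as a feedback interconnection in the sense of \cite{WF,Salam,Sta}, and to verify conditions (a)--(c) of the introduction using positivity together with Assumption \ref{Assp2}.

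\textbf{Step 1: the open-loop system.} Set $C:=\Gamma|_{D(A)}$, $B$ as in \eqref{boundary-control}, and take the unperturbed generator $A=\tilde A|_{\ker G}$. By \eqref{representation} and the decomposition \eqref{decomposition}, $D(\calA)$ consists of those $x\in D(\tilde A)$ with $\calA x=(A_{-1}+B\Gamma)x\in X$. Thus $\calA$ is the closed-loop generator obtained from the triple $(A,B,C)$ under the identity feedback $v=\Gamma x$. Here $\Gamma$ plays the role of the extension $C_\Lambda$ restricted to $D(\tilde A)$, and its agreement with the Weiss extension on $\operatorname{ran} D_\lambda$ will come from (iv).

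\textbf{Step 2: admissibility of $(A,B,C)$.} Observation admissibility of $C$ on $D(A)$ follows from Assumption \ref{Assp2}(ii): split $x=x^+-x^-$ and use positivity of $T$ and of $\Gamma$, so that $|\Gamma T(t)x|\le \Gamma T(t)|x|$. In a Banach lattice the estimate on $D_+(A)$ then extends to $D(A)$, since $D_+(A)$ is dense in $X_+$ via $\lambda R(\lambda,A)$ and $R(\lambda,A)$ is positive.

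Control admissibility of $B$ for $L^p$ uses (i) and (iii). For positive $u$, the integral $\int_0^t T_{-1}(t-s)Bu(s)\,ds$ is a limit of $\int_0^t T(t-s)\lambda D_\lambda u(s)\,ds$ with positive integrands, and (iii) gives uniform bounds in $\lambda$. Since $X$ is nonreflexive (Remark \ref{Further-details}), I expect to need a monotone-limit argument, for instance the fact that positive integrals bounded in norm lie in $X$ when the approximations are monotone in $\lambda$. This is the mechanism behind the cited positive-systems results recalled in Section~\ref{Sec:3}.

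Boundedness of the input/output map is exactly the content of Theorem \ref{S5.P1}, which I would invoke directly. It rests on positivity of $D_\lambda$ and of $\Gamma$, comparing $\Gamma$ applied to the state response with the elliptic solutions.

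\textbf{Step 3: regularity and admissible feedback.} The transfer function is $H(\lambda)=\Gamma D_\lambda$. Assumption (iv) says $H(\lambda)\to 0$ weakly as $\lambda\to+\infty$, so the triple is weakly regular with feedthrough $0$. Hence $I$ is an admissible feedback once $I-H(\lambda)$ is invertible for large $\lambda$ with uniform bound, or, more robustly, once $I-\mathbb F_\tau$ is invertible for small $\tau$ on $L^p(0,\tau;V)$.

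This is the main obstacle, since weak convergence of $H$ does not make it small in norm. I would first use positivity: $\mathbb F_\tau$ is a positive operator, and I would aim to show its norm tends to $0$ as $\tau\to0$, via Theorem \ref{S5.P1}'s estimate with a constant vanishing as $\tau\to0$. If that fails, I would fall back on a positive Neumann series argument, using the spectral radius of $\mathbb F_\tau$ for Volterra-type positive operators.

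\textbf{Step 4: conclusion.} Given an admissible feedback, the closed-loop theory of \cite{WF,Sta} yields that $\calA$ generates a $C_0$-semigroup $\calT$ and that mild solutions are given by \eqref{variation1}. Here $BK$ is admissible for $\calT$, because perturbation preserves admissibility and the input component is compatible with $K\in\mathcal L(U,V)$. For $q\ge p$ we have $L^q_{loc}\subset L^p_{loc}$, so the same formula applies, and continuity in time follows from admissibility. Positivity of $\calT$ comes from the Neumann series $\sum_n(\mathbb F_\tau)^n$, whose terms are all positive, or equivalently from the positive resolvent expansion $R(\lambda,\calA)=R(\lambda,A)+D_\lambda(I-\Gamma D_\lambda)^{-1}\Gamma R(\lambda,A)$ for large $\lambda$. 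Uniqueness follows because mild solutions correspond to solutions of the closed-loop system.
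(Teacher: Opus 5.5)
Your architecture is the paper's. You rewrite \eqref{Main-system} as $\dot z=(A_{-1}+B\Gamma)z+BKu$, obtain a positive $L^p$-well-posed triple $(A,B,C)$ with $C=\Gamma|_{D(A)}$ via Theorem~\ref{S5.P1}, and use $H(\la)=\Gamma D_\la$. You then make $\Vert\mathcal{F}\Vert_{\calL(L^p(0,\tau;V))}<1$ for small $\tau$, apply Proposition~\ref{theorem}, and conclude with admissibility of $BK$ for $q\ge p$. Your direct check that $D(\calA)=\{x\in D(\tilde A):(A_{-1}+B\Gamma)x\in X\}$ via $\operatorname{ran}B\cap X=\{0\}$ is correct.

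\textbf{Gap: (iv) does not give agreement with $C_\Lambda$.} You claim that agreement of $\Gamma$ with the Yosida extension on $\operatorname{ran}D_\la$ ``will come from (iv)''. Assumption~\ref{Assp2}-(iv) only gives $\Gamma D_\la v\rightharpoonup 0$, whereas $C_\Lambda$ in Proposition~\ref{theorem} is defined by a \emph{norm} limit. You need $D(\tilde A)\subset D(C_\Lambda)$ with $C_\Lambda|_{D(\tilde A)}=\Gamma$ to identify $A^{cl}=\check A_{-1}+BC_\Lambda$ with $\calA$. Indeed, for $x\in D(A^{cl})$ one gets $x=R(\la,A)y+D_\la C_\Lambda x\in D(\tilde A)$, and only $C_\Lambda x=\Gamma x$ converts $A^{cl}x\in X$ into $Gx=\Gamma x$.

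The paper supplies this from positivity in Step~1 of the proof of Theorem~\ref{S5.P1}. The identity $D_\la-D_\mu=(\mu-\la)R(\mu,A)D_\la$ gives $0\le\Gamma D_\mu v\le\Gamma D_\la v$ for $\mu\ge\la$ and $v\ge 0$. So $(\Gamma D_n v)_n$ is decreasing and weakly null, and Dini's theorem for Banach lattices yields $\Vert\Gamma D_n v\Vert_V\to 0$. Hence $n\Gamma R(n,A)D_\la v=\frac{n}{n-\la}(\Gamma D_\la v-\Gamma D_n v)\to\Gamma D_\la v$ in norm, and the triple is in fact strongly regular.

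Relatedly, your resolvent-series argument for positivity needs $r(\Gamma D_\la)<1$, which strong convergence alone does not provide. Positivity should instead come from the Neumann series in $\mathcal{F}$, as in Proposition~\ref{theorem}.

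\textbf{Gap: admissibility of $B$.} The monotone-limit mechanism you sketch would fail as stated. Norm-bounded increasing sequences need not converge outside KB-spaces (e.g.\ in $c_0$), and $\la\mapsto\la D_\la v$ is not monotone in general; only $\la\mapsto D_\la v$ is decreasing. No monotonicity is needed. By Lemma~\ref{lemApp}, Assumption~\ref{Assp2}-(iii) alone gives $L^1$-admissibility of $B$ through the Desch--Schappacher criterion, with $\sup_{\la\ge\la_0}\Vert\la D_\la\Vert<\infty$ by uniform boundedness. Positivity of $\Phi_t$ then follows from positivity of $T$ and $D_\la$, and $L^p$-admissibility from H\"older's inequality.

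\textbf{Smaller points.} On the observation side, Lemma~\ref{Well-posed-system}(ii) only requires the estimate on $D_+(A)$, so no extension to $D(A)$ is needed. Your formula $\Gamma T(t)|x|$ is also undefined when $|x|\notin D(A)$. Finally, $\eta_\tau^p=\kappa_0^p\gamma_\tau^p\tau^{p-1}$ vanishes as $\tau\to0$ only for $p>1$. For $p=1$ your spectral-radius fallback would have to be made concrete; the paper's argument has the same restriction.
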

     
     The proof of Theorem \ref{Main1} relies on the following theorem, which is the second result obtained in this paper.
     
     \begin{theorem}\label{S5.P1}
     	Under the assumptions of Theorem \ref{Main1}, we have 
     	\begin{eqnarray}\label{Estimate-wellposed}
     		\left\Vert \Gamma\int_{0}^{t}T_{-1}(t-s)B v(s) \ {\mathrm{d}s}\right\Vert_{{L}^{p}(0,\t;V)}\leq \eta_\tau \Vert v\Vert_{{L}^{p}(0,\t;V)},
     	\end{eqnarray}
     	for all $  v\in {W}^{1,p}_{0,+}(0,\t;V):=\{ v\in {W}^{1,p}_+(0,\t;V): v(0)=0\}$ and $ \t >0 $, where $ \eta_\tau>0 $ is independent of $v$ and satisfies $ \lim_{\tau\to 0}\eta_\tau=0 $ for all $p>1$. 
     \end{theorem}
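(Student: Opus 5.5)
Our plan is to exploit positivity to reduce the input/output estimate to the state/output estimate of Assumption \ref{Assp2}-(ii), using the Dirichlet operator as a bridge. First we rewrite the convolution for smooth inputs. Take $v\in W^{1,p}_{0,+}(0,\tau;V)$ and a large $\lambda\in\R$ for which $D_\lambda\ge0$ by Assumption \ref{Assp2}-(i). Since $B=(\lambda-A_{-1})D_\lambda$ and $v(0)=0$, integrating by parts in $X_{-1}^A$ gives
\begin{align*}
x(t):=\int_0^t T_{-1}(t-s)Bv(s)\,\mathrm{d}s = D_\lambda v(t)-\int_0^t T(t-s)D_\lambda \dot v(s)\,\mathrm{d}s+\lambda\int_0^t T(t-s)D_\lambda v(s)\,\mathrm{d}s .
\end{align*}
This shows $x(t)\in D(\tilde A)$, so $\Gamma x(t)$ is meaningful. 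To avoid the derivative term, we instead use the identity $x(t)=\lambda\int_0^tT(t-s)D_\lambda v(s)\,\mathrm{d}s - \int_0^t T(t-s)D_\lambda\dot v(s)\,\mathrm{d}s + D_\lambda v(t)$ only qualitatively. For the bound we plan to use the ordering instead.

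The key step is a pointwise order estimate. For $v\ge0$, the positivity of $T$ and of $D_\lambda$ (with $B v$ lying in the closure of $X_+$, since $nD_n v\to Bv$ in $X_{-1}^A$) yields $x(t)\ge 0$. We then compare $x(t)$ with $\lambda\int_0^tT(t-s)D_\lambda v(s)\,\mathrm{d}s$ in the limit $\lambda\to\infty$, namely $x(t)=\lim_{\lambda\to\infty}\int_0^tT(t-s)\lambda D_\lambda v(s)\,\mathrm{d}s$. Each approximant $y_\lambda(t)$ has $\Gamma y_\lambda(t)=\int_0^t \Gamma T(t-s)\lambda D_\lambda v(s)\,\mathrm{d}s$ whenever $\lambda D_\lambda v(s)\in D(A)$ or via density. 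Applying Assumption \ref{Assp2}-(ii) inside a Young-type convolution inequality (the state/output map is positive, and for positive data the $L^p$ bound extends to a bound of $\int_0^\tau\|\Gamma T(t)x\|^p$ for $x\in X_+$) gives
\[
\|\Gamma y_\lambda\|_{L^p(0,\tau;V)}\le \gamma_\tau\,\tau^{1-1/p}\sup_{\lambda\ge\lambda_0}\|\lambda D_\lambda\|\,\|v\|_{L^p(0,\tau;V)}.
\]
Here $\sup_\lambda\|\lambda D_\lambda\|<\infty$ follows from Assumption \ref{Assp2}-(iii) and the uniform boundedness principle. This is where $\eta_\tau\to0$ for $p>1$ comes from, through the factor $\tau^{1-1/p}$ together with $\gamma_\tau$ nonincreasing.

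Finally we pass to the limit. We need $\Gamma y_\lambda\to\Gamma x$ in a weak sense. Writing $x-y_\lambda$ via the Dirichlet decomposition produces terms of the form $\Gamma D_\lambda(\cdot)$, which tend to $0$ weakly by Assumption \ref{Assp2}-(iv). Combined with positivity of $\Gamma$, this lets us conclude, via weak lower semicontinuity of the $L^p(0,\tau;V)$ norm, that $\|\Gamma x\|_{L^p}\le\liminf\|\Gamma y_\lambda\|_{L^p}$. We expect this limit step to be the main obstacle, because $\Gamma$ is not closable. Our first attempt will be to test against positive functionals $u^*\in V'_+$ and use that $\Gamma$ is positive and monotone on the cone of $D(\tilde A)$, so that only weak convergence of $\langle\Gamma(\cdot),u^*\rangle$ is needed.
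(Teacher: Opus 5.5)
\textbf{The gap: admissibility of $B$.} Your skeleton is the paper's. You approximate $x(t)=\Phi_t v$ by $y_\lambda(t)=\int_0^tT(t-s)\lambda D_\lambda v(s)\,\mathrm{d}s=\lambda R(\lambda,A)x(t)$. You bound $\Gamma y_\lambda$ with Assumption~\ref{Assp2}-(ii), H\"older/Fubini and $\kappa_0=\sup_\lambda\|\lambda D_\lambda\|$. Then you remove the approximation with Assumption~\ref{Assp2}-(iv). However, your integration-by-parts formula does \emph{not} show $x(t)\in D(\tilde A)$:
\begin{itemize}
\item The term $\lambda\int_0^tT(t-s)D_\lambda v(s)\,\mathrm{d}s$ does lie in $D(A)$, because $D_\lambda v\in W^{1,p}(0,\tau;X)$ and $D_\lambda v(0)=0$.
\item The term $\int_0^tT(t-s)D_\lambda\dot v(s)\,\mathrm{d}s=R(\lambda,A_{-1})\Phi_t\dot v$ lies in $D(A)$ only if $\Phi_t\dot v\in X$, where $\dot v$ is merely in $L^p$. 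That is precisely $L^p$-admissibility of $B$.
\end{itemize}
Without it, $\Gamma x(t)$ is undefined, and so is the Dirichlet decomposition $x(t)=x_0+D_\mu v(t)$, $x_0\in D(A)$, that your limit step relies on. The paper supplies this from Assumption~\ref{Assp2}-(iii) via Lemma~\ref{lemApp} ($L^1$-admissibility), combined with positivity and H\"older. You must add this step.

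\textbf{The bound on $\Gamma y_\lambda$.} The formula $\Gamma y_\lambda(t)=\int_0^t\Gamma T(t-s)\lambda D_\lambda v(s)\,\mathrm{d}s$ is only formal. Since $D(A)\cap\ker(\lambda-\tilde A)=\{0\}$, $\lambda D_\lambda v(s)\notin D(A)$ unless $v(s)=0$, and the integral need not converge in $X_1$. ``Via density'' has to be made concrete. The paper inserts a second resolvent and writes $\Gamma\lambda R(\lambda,A)\,mR(m,A)x(t)=\int_0^t CT(t-s)\,mR(m,A)\lambda D_\lambda v(s)\,\mathrm{d}s$, where the integral now converges in $X_1$. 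It then applies (ii) to $mR(m,A)\lambda D_\lambda v(s)\in D_+(A)$, which is where $v\ge 0$ and $D_\lambda\ge0$ enter. After H\"older and Fubini, it lets $m\to\infty$, using $\Gamma\lambda R(\lambda,A)\in\mathcal{L}(X,V)$ and Fatou. Two smaller points: $\gamma_\tau$ should be nondecreasing in $\tau$, not ``nonincreasing''; and (ii) must first be extended from the single $\alpha$ to all $\tau$ by the semigroup property.

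\textbf{The limit step.} Your mechanism works, and weak convergence is enough here.
\begin{itemize}
\item Fix $t$. Then $\Gamma\lambda R(\lambda,A)x_0\to\Gamma x_0$ in norm.
\item The identity $\lambda R(\lambda,A)D_\mu=\frac{\lambda}{\lambda-\mu}(D_\mu-D_\lambda)$ together with (iv) gives $\Gamma y_\lambda(t)\rightharpoonup\Gamma x(t)$ in $V$.
\item Do not appeal to weak lower semicontinuity in $L^p(0,\tau;V)$, since you have no weak convergence in that space. Use $\|\Gamma x(t)\|_V\le\liminf_n\|\Gamma y_n(t)\|_V$ pointwise in $t$, and then Fatou's lemma.
\end{itemize}
Done this way, you need neither positivity of $\Gamma$ nor testing against $V'_+$. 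You also bypass the paper's monotonicity-plus-Dini argument, which upgrades (iv) to $\|\Gamma D_nv\|_V\to0$. The paper needs that norm convergence later, for strong regularity in the proof of Theorem~\ref{Main1}, but not for this estimate.
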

     
     \begin{remark}
     	The result above extends the method of \cite[Thm. 3.2]{ChM} to Banach lattices. Notably, Theorem \ref{S5.P1} provides an alternative way to establish the boundedness of input/output maps for LTI systems with unbounded control and observation operators. 
     	\vspace{.1cm}
     	
     	It is worth noting that a recent result by \cite{Alessio2026}, exploiting Banach lattice theory and the positivity of the unforced  dynamic, derives a spectral condition that also implies the boundedness of input/output maps when the input and output spaces are finite-dimensional.
     \end{remark}

    \section{Preliminaries on linear infinite-dimensional positive systems}\label{Sec:3}
     In this section, we provide a brief preliminaries on LTI positive systems with unbounded input and output operators. For this purpose, let $X$ and $V$ be Banach lattices. Consider the following input/output system
     \begin{eqnarray}\label{input-ouptut}
     	\begin{cases}
     		\dot{z}(t) =A_{-1} z(t)+Bv(t),& t> 0,\quad {z(0)=z_0},\\
     		y(t) = P z(t), & t> 0,
     	\end{cases}
     \end{eqnarray}
     where $A$ generates a $C_0$-semigroup $T$ on $X$, $B \in \calL(V, X_{-1}^A)$ is the input operator, and $P \in \calL(Z,V)$ is the output operator. Here, the space $Z$ is defined by
     \begin{align*}
     	Z=X_1+R(\la,A_{-1})BV, \qquad \la\in \rho(A),
     \end{align*}
     where $X_1$ denotes ${D}(A)$ endowed with the graph norm. Equipping $Z$ with the norm
      \begin{align*}
     		\Vert z\Vert^2_{Z}=\inf\left\{\Vert x\Vert^2_{X_1}+\Vert v\Vert^2_{V}:x\in X_1,v\in V,z=x+R(\la,A_{-1}) Bv \right\},
     \end{align*}
     makes it a Banach space satisfying $X_1 \subseteq Z \subseteq X$ with continuous embeddings. We note that $Z$ is independent of the choice of $\lambda$ due to the resolvent equation.
     \vspace{.1cm}
     
     We recall the following definition (see, e.g., \cite[Def. 3.3]{El3}).
     \begin{definition}\label{Definition-Lp-well-posed}
     		The input/output system \eqref{input-ouptut} is called ${L}^p$-well-posed (for $1\le p<\infty$) if, for every $ \tau > 0 $, initial state ${z_0}\in {D}(A)$, and input $v\in {W}^{1,p}_{0}(0,\tau;V)$, there exists a constant $ c_\tau>0 $ (independent of $x$ and $v$) such that the following estimate holds for all solutions $(z,y)$ of \eqref{input-ouptut}:
     		\begin{eqnarray*}
     			\Vert z(\tau)\Vert^{p}_{X}+\Vert y\Vert^{p}_{ {L}^{p}(0,\tau;V)}\leq c_\tau\left( \Vert {z_0}\Vert^{p}_{X}+\Vert v\Vert^{p}_{ {L}^{p}(0,\tau;V)}\right).
     		\end{eqnarray*}
     	\end{definition}
     	
     	The following lemma provides a complete characterization for the well-posedness of \eqref{input-ouptut} when the involved operators are positive.
     	\begin{lemma}[{\cite[Prop. 3.3]{El3}}]\label{Well-posed-system}
     		Let $T$, $B$, and $P$ be positive operators. Set $C := P\vert_{{D}(A)}$ and let $p \in [1,\infty)$. Then, the input/output system \eqref{input-ouptut} is ${L}^p$-well-posed if and only if
     		\begin{itemize}
     			\item[(i)] $ B$ is an ${L}^p$-admissible positive control operator for $A$, i.e., {for} some (hence for all) $t>0$ and all $v\in {L}_+^p(0,t;V)$, we have
     			\begin{eqnarray*}
     				\Phi_{t}v:=\int_{0}^{t} T_{-1}(t-s)Bv(s)\ {\mathrm{d}s}\in X_+;
     			\end{eqnarray*}
     			\item[(ii)] $C$ is an ${L}^p$-admissible positive observation operator for $ A $, i.e.,  {for} some (hence for all) $t >0,$ there exists $\gamma_t>0$ such that
     			\begin{eqnarray}\label{observatio-estimate}
     				\Vert CT(\cdot)x\Vert_{{L}^{p}(0,t;V)}\leq \gamma_t\Vert x\Vert_X,\quad \forall x\in {D}_+(A);
     			\end{eqnarray}
     			\item[(iii)] For every $t >0$ there exists $\eta_t>0 $ such that
     			\begin{eqnarray*}
     				\Vert P \Phi_\cdot v\Vert_{{L}^{p}(0,t;V)}\leq \eta_t \Vert v\Vert_{{L}^{p}(0,t;V)},\; \forall v\in {W}^{1,p}_{0,+}(0,t;V).
     			\end{eqnarray*}
     		\end{itemize}
     	\end{lemma}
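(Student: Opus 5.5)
The plan is to prove the two implications separately. The real work is in the sufficiency direction, where estimates stated only on positive cones must be upgraded to estimates for arbitrary data. Positivity will be used through two facts: a positive operator is dominated by its action on $|\cdot|$, and $X_+=X\cap X^A_{-1,+}$.

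\emph{Necessity.} Assume \eqref{input-ouptut} is ${L}^p$-well-posed.
\begin{itemize}
\item Taking $v=0$ gives $\|CT(\cdot)x\|_{L^p(0,t;V)}\le c_t^{1/p}\|x\|_X$ for all $x\in D(A)$, and in particular on ${D}_+(A)$. This is (ii).
\item Taking $z_0=0$ gives $\|\Phi_\tau v\|_X^p+\|P\Phi_\cdot v\|^p_{L^p}\le c_\tau\|v\|^p_{L^p}$ for $v\in W^{1,p}_0(0,\tau;V)$, which contains (iii).
\item By density of $W^{1,p}_0$ in $L^p$ and continuity of $\Phi_\tau$ into $X_{-1}^A$, the operator $\Phi_\tau$ extends to a bounded map $L^p(0,\tau;V)\to X$.
\item For $v\ge 0$, positivity of $T_{-1}$ (on $X^A_{-1,+}$) and of $B$ gives $\Phi_\tau v\in X^A_{-1,+}$. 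Hence $\Phi_\tau v\in X\cap X^A_{-1,+}=X_+$, which is (i).
\end{itemize}

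\emph{Sufficiency, step 1 (control operator).} By (i), $\Phi_t$ is a positive linear map from $L^p(0,t;V)$ into $X$, using $v=v^+-v^-$. A positive operator between Banach lattices is automatically bounded, so $\|\Phi_t v\|_X\le c\|v\|_{L^p}$ for all $v$. Alternatively, the closed graph theorem applies, since $\Phi_t$ is continuous into $X_{-1}^A$.

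\emph{Sufficiency, step 2 (observation operator).} Let $x\in D(A)$ and set $x_n:=nR(n,A)x=nR(n,A)x^+-nR(n,A)x^-$. Both pieces lie in ${D}_+(A)$ with norms at most $M\|x\|$, so (ii) gives $\|CT(\cdot)x_n\|_{L^p}\le 2M\gamma_t\|x\|$. Since $x_n\to x$ in $X_1$ and $C\in\calL(X_1,V)$, we have $CT(\cdot)x_n\to CT(\cdot)x$ uniformly on $[0,t]$. Therefore $\|CT(\cdot)x\|_{L^p}\le 2M\gamma_t\|x\|$.

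\emph{Sufficiency, step 3 (input/output map).} This is the main obstacle: $|v|$ need not lie in $W^{1,p}$ when $V$ is non-reflexive, so (iii) cannot be applied to $|v|$ directly. I would regularize in time with a positive exponential kernel. For $v\in W^{1,p}_0(0,t;V)$ set
\begin{align*}
v_n(s)=n\int_0^s e^{-n(s-r)}v(r)\,\mathrm{d}r,\qquad w_n(s)=n\int_0^s e^{-n(s-r)}|v(r)|\,\mathrm{d}r.
\end{align*}
\begin{itemize}
\item $w_n\ge0$, $w_n(0)=0$ and $w_n'=n(|v|-w_n)\in L^p$, so $w_n\in W^{1,p}_{0,+}$. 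The same formula shows $v_n\in W^{1,p}_0$.
\item $|v_n|\le w_n$ pointwise, so $w_n\pm v_n\in W^{1,p}_{0,+}$.
\item The map $v\mapsto P\Phi_\cdot v$ on $W^{1,p}_0$ is positive on positive elements, because $\Phi_s$ maps into $Z$ and is positive, and $P$ is positive. Hence $|P\Phi_\cdot v_n|\le P\Phi_\cdot w_n$.
\item Applying (iii) gives $\|P\Phi_\cdot v_n\|_{L^p}\le\eta_t\|w_n\|_{L^p}\le\eta_t\|v\|_{L^p}$.
\end{itemize}
Since $v\in W^{1,p}_0$, one checks $v_n\to v$ in $W^{1,p}$; $v_n(0)=0$ is automatic. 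By the standard estimate $\Phi_s v=R(\lambda,A)\Phi_s(\lambda v-\dot v)+R(\lambda,A_{-1})Bv(s)$, the map $v\mapsto\Phi_\cdot v$ is continuous from $W^{1,p}_0$ into $C([0,t];Z)$. Passing to the limit yields $\|P\Phi_\cdot v\|_{L^p}\le\eta_t\|v\|_{L^p}$ for all $v\in W^{1,p}_0$.

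\emph{Conclusion.} Write $z(t)=T(t)z_0+\Phi_tv$ and $y=CT(\cdot)z_0+P\Phi_\cdot v$. The three bounds from steps 1–3, combined with $\|T(\tau)\|\le Me^{\omega\tau}$, give the estimate of Definition \ref{Definition-Lp-well-posed}.
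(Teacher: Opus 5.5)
Your proof is correct. The paper gives no argument of its own for this lemma; it is quoted from \cite[Prop.~3.3]{El3}. So there is nothing in the paper to compare step by step, and what you have written is a complete, self-contained proof.

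The necessity direction is routine. Your use of $X_+=X\cap X^A_{-1,+}$ to obtain the positivity in (i) is the right point; it holds because $R(\mu,A)\ge 0$ for large $\mu$. In the sufficiency direction you rely, legitimately, on the ``hence for all $t$'' clause of the statement. You also use the standard bound $\sup_{s\le t}\|\Phi_s\|\le\|\Phi_t\|$, obtained by zero-padding the input.

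The substantive step is your Step~3: upgrading (iii) from $W^{1,p}_{0,+}$ to all of $W^{1,p}_0$ with the same constant. The paper handles the analogous passage, both after this lemma and at the end of the proof of Theorem~\ref{S5.P1}, only by invoking density of $W^{1,p}_{0,+}(0,\tau;V)$ in $L^p_+(0,\tau;V)$ and the fact that this cone is generating. That argument tacitly needs two things:
\begin{enumerate}
\item the extension obtained from the cone must agree with $v\mapsto P\Phi_\cdot v$ on $W^{1,p}_0$;
\item the density itself must hold for $V$-valued functions when $V$ lacks the Radon--Nikod\'ym property, where $v^\pm$ need not be in $W^{1,p}$.
\end{enumerate}
Your exponential smoothing $w_n=n\int_0^{\cdot}e^{-n(\cdot-r)}|v(r)|\,\mathrm{d}r$ settles both. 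It lies in $W^{1,p}_{0,+}$ via the explicit formula $w_n'=n(|v|-w_n)$, and it dominates $|v_n|$. It also satisfies $\|w_n\|_{L^p}\le\|v\|_{L^p}$ by Young's inequality, so you keep the sharp constant $\eta_t$. The continuity of $v\mapsto\Phi_\cdot v$ from $W^{1,p}_0$ into $Z$, uniformly in $s$, then closes the argument.

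For comparison, the simpler dominating function $w(s)=\int_0^s|\dot v(r)|\,\mathrm{d}r$ also shows $W^{1,p}_0=W^{1,p}_{0,+}-W^{1,p}_{0,+}$. However, it does not control $\|w\|_{L^p}$ by $\|v\|_{L^p}$. Your kernel gives the decomposition and the norm bound at once.
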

     	
     	We comment on some consequences of the above lemma. Condition (i) implies that the state trajectories $z(\cdot)$ satisfies 
     	\begin{eqnarray*}
     		0\le	z(t)= T(t){z_0}+ \Phi_tv ,\;\; \forall t \ge 0, {z_0} \in X_+, v \in {L}^{p}_+(\R_+; V).
     	\end{eqnarray*}
     	Condition (ii) implies that \eqref{observatio-estimate} holds for all $x \in X$ \cite[Rem. 3.2]{El3}. Thus, the map $x \mapsto C T(\cdot) x$, defined for $x \in {D}(A)$, has a positive bounded extension from $X$ to ${L}^p(0,t;V)$ for each $t > 0$.	By assuming $0\in \rho(A)$ (without loss of generality) and using integration by parts along with the admissibility of $B$, we obtain
     	\begin{eqnarray}\label{phi}
     		\Phi_t v =(-A)^{-1}Bv(t)-(-A)^{-1}\Phi_t\dot{v}\in Z,
     	\end{eqnarray}
     	for all $v\in {W}^{1,p}_0(0,t;V)$. Therefore, we define the operator
     	\begin{eqnarray}\label{input-output-operator}
     		(\mathcal{F}v)(s):=P\Phi_s v, \quad \forall s\in [0,t],\ v\in  {W}^{1,p}_0(0,t;V).
     	\end{eqnarray}
     	By condition (iii) and the density of ${W}^{1,p}_{0,+}(0,t;V)$ in ${L}^{p}_+(0,t;V)$, the operator $\mathcal{F}$ extends uniquely to a positive bounded linear operator from ${L}^{p}(0,t; V)$ to ${L}^{p}(0,t; V)$ for each $t > 0$.

     	\begin{definition}\label{Def.triple}
     		Consider the setting of Lemma \ref{Well-posed-system}. We say that $(A,B,C)$ is a positive ${L}^p$-well-posed triple on ($V,X,V$) if the conditions (i)-(iii) in Lemma \ref{Well-posed-system} are satisfied. 
     	\end{definition}
     	
     	We note that for a positive ${L}^p$-well-posed triple $(A,B,C)$, there exists a unique analytic $\mathcal{L}(V)$-valued transfer function $H$ given by
     	\begin{align*}
     		H(\lambda) = P R(\lambda,A_{-1}) B, \qquad \forall  {\mathrm{R}\mathrm{e}( \lambda)} > \omega_0(T).
     	\end{align*}
     	It is important to note that $P$ denotes an extension of the observation operator 
     	$C$, which is generally not unique. Consequently, the transfer function is defined up to such an extension; while we do not notationally distinguish between different versions that differ by a fixed operator, as all such functions are analytic. For more details, see \cite[Sect. 3]{StaffansWeiss2002} or \cite[Sect. 4.6]{Sta}.
     	\vspace{.1cm}
     	
     	Now, we recall subclass of ${L}^p$-well-posed linear systems; see \cite[Def. 5.6.1]{Sta} (see also \cite{WR}).
     	\begin{definition}\label{Definition-regularity}
     		Consider the setting of Definition \ref{Def.triple}. The triple $(A,B,C)$ is called a positive $L^p$-well-posed triple with zero feedthrough which is of weakly, strongly, or uniformly regular type if, respectively, we have for any $v\in V$, $u^*\in V'$, $\lim_{\la\to +\infty }\langle H(\la)v,u^*\rangle_{V,V'}=0$, for any $v\in V$, $\lim_{\la\to +\infty }\Vert H(\la)v\Vert_V=0$, or $\lim_{\la\to +\infty }\Vert H(\la)\Vert_{\calL({V})}=0$, all referring to purely real-valued $\la$.
     	\end{definition}
     	\begin{remark}\label{Yosida-extension}
     		It is worth noting that for a positive ${L}^p$-well-posed strongly regular triple $(A,B,C)$ with zero feedthrough, the transfer function admits the representation
     		\begin{align*}
     			H(\la)=C_\Lambda R(\la,A_{-1})B,\qquad \forall {\mathrm{R}\mathrm{e}(\lambda)} >\omega_0(T),
     		\end{align*}
     		where $C_\Lambda $ is the Yosida extension of $C$ with respect to $A$ defined by
     		\begin{align*}
     			&D(C_\Lambda):= \{x\in X: \lim_{\la \to +\infty}C\la R(\la ,A)x \text{ exists  in } V\},\cr
     			&C_\Lambda x := \lim_{\la \to +\infty}C\la R(\la ,A)x,\qquad x\in D(C_\Lambda).
     		\end{align*}
     		Furthermore, the extension of the operator $\F$, defined in \eqref{input-output-operator}, is given by 
     		\begin{align*}
     			(\mathcal{F} v)(t)=C_\Lambda \Phi_t v,  
     		\end{align*}
     		for all $v\in {L}^p_{loc}(0,\tau;V)$ and a.e. $t\ge 0$. In this case, $\mathcal{F}$ is called the extended input/output map of  $(A,B,C)$.
     	\end{remark}
     	
     	We close this section by a version of the  Weiss-Staffans perturbation theorem for positive systems.
     	\begin{proposition}[{\cite[Thm. 3.1]{El3}}]\label{theorem}
     		Consider the setting of Lemma \ref{Well-posed-system}. Assume that $(A,B,C)$ is a positive ${L}^p$-well-posed weakly regular triple with zero feedthrough and the extended input/output map $\mathcal{F}$. Define 
     		\begin{align*}
     			W_{-1}&:=(\la I-\check{A}_{-1}){D}(C_\Lambda), \qquad \la\in \rho(A),
     		\end{align*}
     		where $\check{A}_{-1}$ is the restriction of $A_{-1}$ to ${D}(C_\Lambda)$. If $r(\mathcal{F})<1$, then the operator 
     		\begin{align*}
     			A^{cl}:=\check{A}_{-1}+BC_\Lambda,\qquad  {D}(A^{cl}):=\left\{x\in {D}(C_\Lambda):\ (\check{A}_{-1}+BC_\Lambda)\in X\right\},
     		\end{align*}
     		generates a positive $C_0$-semigroup $T^{cl}:=(T^{cl}(t))_{t\ge 0}$ on $X$ given by 
     		\begin{eqnarray}\label{variation}
     			T^{cl}(t)x=T(t)x+\int_{0}^{t}T_{-1}(t-s)BC_{\Lambda}T^{cl}(s)x\ {\mathrm{d}s},
     		\end{eqnarray}
     		for all $t\ge 0$ and $x\in X$.	Moreover, $(A^{cl},B^{cl},C^{cl}_\Lambda)$ is a positive ${L}^p$-well-posed strongly regular triple with
     		\begin{eqnarray*}
     			C^{cl}_\Lambda=C_\Lambda\in \calL({D}(C_\Lambda),V), \qquad B^{cl}=B\in \calL(V,W_{-1}).
     		\end{eqnarray*}
     		In addition, we have $s(A^{cl})\ge s(A)$.
     	\end{proposition}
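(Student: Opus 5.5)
The plan is to treat the statement as the positive specialization of the Weiss--Staffans feedback theorem with identity feedback. I would first obtain the abstract closed-loop system from the general theory, and then read positivity, regularity and the spectral comparison off the explicit formulas.

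\emph{Step 1 (admissibility of the identity feedback).} Since $r(\mathcal{F})<1$, the Neumann series $(I-\mathcal{F})^{-1}=\sum_{n\ge0}\mathcal{F}^n$ converges in $\mathcal{L}(L^p(0,t;V))$ for every $t>0$. This uses the causality and shift-invariance of $\mathcal{F}$, which let the spectral radius be computed on each finite interval. Each $\mathcal{F}^n$ is positive, so $(I-\mathcal{F})^{-1}$ is positive. Hence $I$ is an admissible feedback operator in the sense of \cite{WF,Sta}. The feedback theorem for $L^p$-well-posed weakly regular systems (\cite[Chap.~7]{Sta}, \cite{WF}) then gives the following:
\begin{itemize}
\item a well-posed closed-loop system whose semigroup $T^{cl}$ satisfies \eqref{variation};
\item a generator of the form $A^{cl}=\check{A}_{-1}+BC_\Lambda$ on the stated domain;
\item a closed-loop output map $C_\Lambda T^{cl}(\cdot)x=(I-\mathcal{F})^{-1}C_\Lambda T(\cdot)x$ and input/output map $\mathcal{F}(I-\mathcal{F})^{-1}$.
\end{itemize}
I will take these structural facts from \cite{El3} and the Weiss--Staffans theory rather than redo them.

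\emph{Step 2 (positivity).} From \eqref{variation} and the output identity, for $x\in X_+$ we get
\begin{equation*}
T^{cl}(t)x=T(t)x+\Phi_t\,(I-\mathcal{F})^{-1}C_\Lambda T(\cdot)x .
\end{equation*}
Here $T(t)x\ge0$, $C_\Lambda T(\cdot)x\ge0$ because $C$ is a positive admissible observation operator, $(I-\mathcal{F})^{-1}\ge0$ by Step 1, and $\Phi_t\ge0$ by Lemma \ref{Well-posed-system}(i). So $T^{cl}$ is positive. The same factorization shows that the closed-loop control, observation and input/output maps are positive. Lemma \ref{Well-posed-system} then makes $(A^{cl},B,C_\Lambda)$ a positive $L^p$-well-posed triple.

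\emph{Step 3 (strong regularity).} This is the step I expect to be the main obstacle, since only weak regularity is assumed. The intended route uses positivity as follows.
\begin{itemize}
\item For $v\in V_+$ and real $\lambda\ge\mu$ large, the resolvent identity and positivity give $0\le H(\lambda)v\le H(\mu)v$, so $\lambda\mapsto H(\lambda)v$ is a decreasing positive net.
\item By weak regularity this net converges weakly to $0$.
\item A decreasing positive net converging weakly to $0$ converges in norm: the weak limit is its infimum, and a Dini-type argument in Banach lattices makes it norm convergent. This step needs checking; I would try the weak-to-norm argument for monotone nets first.
\item Splitting a general $v=v^+-v^-$ then yields strong regularity of $H$.
\end{itemize}
For the closed loop, $H^{cl}(\lambda)=H(\lambda)(I-H(\lambda))^{-1}$. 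I would bound $\|(I-H(\lambda))^{-1}\|$ uniformly for large real $\lambda$. The plan is to use $r(H(\lambda))\le r(\mathcal{F})<1$, which holds because $H(\lambda)$ acts as $\mathcal{F}$ on the inputs $e^{\lambda\cdot}v$, together with positivity to get a Neumann bound. Then $H^{cl}(\lambda)v\to0$ in norm. The closed-loop observation operator stays $C_\Lambda$, by the standard argument that the Yosida extension is unchanged under regular feedback.

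\emph{Step 4 ($s(A^{cl})\ge s(A)$).} By Step 2 the integral term in \eqref{variation} is positive for $x\ge0$, so $0\le T(t)x\le T^{cl}(t)x$. For a positive semigroup on a Banach lattice, the spectral bound equals the abscissa of convergence of the improper Laplace integral $\int_0^\infty e^{-\lambda t}T(t)x\,\mathrm{d}t$, $x\ge0$ (Greiner/Arendt). For real $\lambda>s(A^{cl})$ the integral for $T^{cl}$ converges. By domination, and since the norm is monotone on a lattice so that the Cauchy criterion transfers, the integral for $T$ converges as well. Hence $s(A)\le s(A^{cl})$.
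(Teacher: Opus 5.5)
\paragraph{Context and minor points.}
The paper gives no proof of this proposition; it is imported from \cite[Thm.~3.1]{El3}. Your outline is a sound reconstruction. It consists of identity feedback in the Weiss--Staffans theory, $(I-\mathcal{F})^{-1}=\sum_{n\ge0}\mathcal{F}^n\ge0$, the factorization $T^{cl}(t)x=T(t)x+\Phi_t(I-\mathcal{F})^{-1}C_\Lambda T(\cdot)x$, and domination $0\le T(t)\le T^{cl}(t)$ combined with the equality of spectral bound and abscissa of convergence for positive semigroups. Your open-loop weak-to-strong upgrade (monotonicity of $H(\lambda)v$ plus Dini) is exactly the argument the paper runs for $\Gamma D_n v$ in Step~1 of the proof of Theorem~\ref{S5.P1}. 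There are two minor points:
\begin{enumerate}
\item[(a)] That upgrade does not depend on Steps~1--2 and should come first. The feedback theorem can then be applied in its strongly regular form, which yields the generator in terms of $C_\Lambda$; the weakly regular form is stated with the weak Yosida extension.
\item[(b)] The structural facts cannot be taken ``from \cite{El3}'', since that reference is the statement being proved. They have to come from the Banach-space $L^p$ feedback theory in \cite{Sta}.
\end{enumerate}

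\paragraph{The step that fails.}
The closed-loop part of Step~3 does not work as written, because the claim $r(H(\lambda))\le r(\mathcal{F})$ is false. Here $r(\mathcal{F})$ is the spectral radius on a finite interval $L^p(0,t;V)$; that is how you use it in Step~1, and how the paper verifies it in the proof of Theorem~\ref{Main1}. This quantity does not control $H(\lambda)$ at any fixed $\lambda$.

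A delay line as in Section~\ref{Sec:5} is a counterexample. Take the left-shift semigroup on $L^p(-r,0)$, input $\varphi(0)=v$, and output $c\varphi(-r)$. Then $(\mathcal{F}v)(t)=c\,v(t-r)$ is nilpotent on every $L^p(0,t)$, so $r(\mathcal{F})=0$. Yet $r(H(\lambda))=ce^{-\lambda r}>0$ for every real $\lambda$. This is also why the paper needs $r(\Gamma D_0)<1$ as a separate condition in Section~\ref{Sec:5}.

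On $[0,\tau]$, the exponential-input identity $\mathcal{F}(e^{\lambda\cdot}v)=e^{\lambda\cdot}H(\lambda)v-C_\Lambda T(\cdot)R(\lambda,A_{-1})Bv$, applied to the $n$-fold cascade, only yields the asymptotic bound $\limsup_{\lambda\to\infty}\|H(\lambda)^n\|\le\|\mathcal{F}^n\|_{\mathcal{L}(L^p(0,\tau;V))}$.

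\paragraph{How to repair it.}
The asymptotic bound is still enough. Pick $n$ with the right-hand side $<1$; this gives $r(H(\mu))<1$ for some large $\mu$. Positivity then gives $0\le H(\lambda)\le H(\mu)$ for $\lambda\ge\mu$, hence $0\le(I-H(\lambda))^{-1}\le(I-H(\mu))^{-1}$, which is the uniform bound you want.

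A simpler route: Step~1 already gives a well-posed closed loop with input/output map $\mathcal{F}(I-\mathcal{F})^{-1}=(I-\mathcal{F})^{-1}-I$. Hence $(I-H(\lambda))^{-1}=I+H^{cl}(\lambda)$ is bounded on a right half-plane.

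In either case, conclude through $H^{cl}(\lambda)v=(I-H(\lambda))^{-1}H(\lambda)v$, with the $\lambda$-dependent bounded factor on the left. In the order $H(\lambda)(I-H(\lambda))^{-1}v$, strong convergence of $H(\lambda)$ alone would not suffice.
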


	    \section{Proof of the main results}\label{Sec:4}
	     This section provides the proofs of the results stated in {Section \ref{Sec:2}}. 
	     
	     \subsection{Proof of Theorem \ref{S5.P1}}
	     We begin by recalling a result on admissibility of control operators. The proof follows from \cite[Thm. 9]{Desch} and \cite[Prop. 3.3]{NaS}, {and hence is omitted here.}

	     \begin{lemma}\label{lemApp}
	     	Let $X,U$ be Banach spaces and let Assumption \ref{Assp1} be satisfied. Suppose that there exists $\lambda_0 > \omega_0(T)$ such that $\sup_{\lambda \ge \lambda_0} \| \lambda D_\lambda v \|_X < \infty$ for all $v \in V$. Then, the operator $B$, defined by \eqref{boundary-control}, is an ${L}^1$-admissible control operator for $A$.
	     \end{lemma}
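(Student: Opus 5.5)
The plan is to reduce $L^1$-admissibility to a uniform Favard-class estimate for the range of $B$, and then to build the estimate for $\Phi_t$ from step functions. The first observation is that the Dirichlet operator is exactly the extrapolated resolvent applied to $B$. By \eqref{boundary-control}, for $\lambda\in\rho(A)$,
\[
R(\lambda,A_{-1})Bv = D_\lambda v \in X .
\]
So the hypothesis says precisely that, for every $v\in V$, the element $Bv\in X_{-1}^A$ satisfies $\sup_{\lambda\ge\lambda_0}\|\lambda R(\lambda,A_{-1})Bv\|_X<\infty$. In other words, $\operatorname{ran}B$ lies in the Favard class $F_0$ of $A$, the extrapolated Favard space of \cite[Chap.~II.5]{engel2000}.

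Next, the family $\{\lambda D_\lambda\}_{\lambda\ge\lambda_0}\subset\mathcal{L}(V,X)$ is pointwise bounded. The uniform boundedness principle therefore gives a single constant $M_0$ with
\[
\|\lambda D_\lambda v\|_X\le M_0\|v\|_V \qquad \text{for all } \lambda\ge\lambda_0,\ v\in V .
\]

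The second step converts this resolvent bound into a time-domain bound for constant inputs. I want to show that for $x=Bv$ and $0<t\le 1$,
\[
\Big\|\int_0^t T_{-1}(s)x\,\mathrm{d}s\Big\|_X\le M_1\, t\,\|v\|_V .
\]
This is the standard equivalence between the resolvent and semigroup descriptions of the Favard class (Engel--Nagel, Favard/Hille--Yosida type characterization), applied to the rescaled semigroup $e^{-\lambda_0 t}T$ so that the uniform resolvent bound makes sense. To make the constant explicit, I would write $x_\lambda:=\lambda D_\lambda v\in X$. Then $x_\lambda\to Bv$ in $X_{-1}^A$, and
\[
\int_0^t T(s)x_\lambda\,\mathrm{d}s
\]
is bounded in $X$ by $te^{|\omega|t}\|T\|\,M_0\|v\|$. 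This family converges in $X_{-1}^A$ to $\int_0^t T_{-1}(s)Bv\,\mathrm{d}s$. In a nonreflexive space, however, the $X$-norm bound does not automatically pass to the limit. This passage to the limit is the main obstacle. I would handle it as in \cite[Thm.~9]{Desch}: one uses
\[
\int_0^t T_{-1}(s)Bv\,\mathrm{d}s=\lambda R(\lambda,A_{-1})\!\int_0^t T_{-1}(s)Bv\,\mathrm{d}s + \text{(a term that is in $X$ and explicitly bounded)},
\]
so that membership in $X$, together with the norm bound, follows from the identity $A_{-1}\int_0^tT_{-1}(s)x\,\mathrm{d}s=T_{-1}(t)x-x$ and the uniform resolvent estimate. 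This is the content of \cite[Prop.~3.3]{NaS}, which I would invoke at this point rather than redo.

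Finally, I take a step function $v=\sum_k v_k\mathbf 1_{[s_{k-1},s_k)}$ on $[0,t]$. Splitting the integral,
\[
\Phi_t v=\sum_k T(t-s_k)\int_0^{s_k-s_{k-1}}T_{-1}(r)Bv_k\,\mathrm{d}r .
\]
Using the bound from the second step, with Lipschitz subintervals treated by the semigroup property when they are longer than $1$, I obtain
\[
\|\Phi_t v\|_X\le \sup_{0\le r\le t}\|T(r)\|\,M_1\sum_k (s_k-s_{k-1})\|v_k\|_V = C_t\|v\|_{L^1(0,t;V)} .
\]
Step functions are dense in $L^1(0,t;V)$, and $\Phi_t$ is continuous from $L^1$ into $X_{-1}^A$. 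Since $X$ is complete and embeds continuously in $X_{-1}^A$, $\Phi_t$ extends to all of $L^1(0,t;V)$ with values in $X$ and the same bound. This is exactly the $L^1$-admissibility of $B$.
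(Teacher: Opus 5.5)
Your proposal is correct and follows the same route the paper takes by citing \cite[Thm.~9]{Desch} and \cite[Prop.~3.3]{NaS}. The hypothesis places $\operatorname{ran}B$ uniformly in the extrapolated Favard class, via $R(\lambda,A_{-1})B=D_\lambda$ and Banach--Steinhaus, and the Favard-class bound then gives the $L^1$ estimate for $\Phi_t$ on step functions and, by density, on all of $L^1$. The step you flag as the main obstacle is already closed by your own identity: $\int_0^tT_{-1}(s)Bv\,\mathrm{d}s=\int_0^tT(s)\lambda D_\lambda v\,\mathrm{d}s-(T(t)-I)D_\lambda v$, and letting $\lambda\to+\infty$ yields $\|\int_0^tT_{-1}(s)Bv\,\mathrm{d}s\|_X\le t\sup_{0\le s\le t}\|T(s)\|\,M_0\|v\|_V$ without any further citation.
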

	     \begin{remark}
	     	We note that the above admissibility result is particularly interesting when the state-space $X$ is not reflexive, cf. Remark \ref{Further-details}.
	     \end{remark}
	     Let us also recall that
	     \begin{align}
	     	&\lim_{n\to +\infty} \Vert n R(n,A)x-x\Vert_{X_1}=0,\quad\forall x\in {D}(A),\label{Eq1}\\
	     	& \lim_{n\to +\infty} \Vert n R(n,A)x-x\Vert_{X}=0,\quad\forall x\in X,\label{Eq2}
	     \end{align}
	     see, e.g., \cite[Lem. II.3.4]{engel2000}. 
	     \vspace{.1cm}
	     
	     We now prove Theorem \ref{S5.P1} in two steps.
	     \vspace{.1cm}
	     
	     \underline{\bf Step 1.} We prove that 
	     \begin{eqnarray}\label{S5.9"}
	     	\lim_{n\to +\infty} \Vert \Gamma_n z-\Gamma z\Vert_{V}=0,\qquad \forall z\in {D}(\tilde{A}),
	     \end{eqnarray}
	     where, for sufficiently large $n\in \N$ ($n>s(A)$), $ \Gamma_n\in \mathcal{L}(X,V)$ {denotes} the Yosida-like approximations of $\Gamma$, defined by 
	     \begin{eqnarray*}
	     	\Gamma_n x:=\Gamma n R(n,A)x,\quad x\in X.
	     \end{eqnarray*}
	     Indeed, let $v\in V_+$. Using the resolvent equation, we obtain
	     \begin{align*}
	     	0\le \Gamma D_{\la} v\le \Gamma D_\mu v, \qquad \forall \la\ge \mu>s(A),
	     \end{align*}
	     {due to the facts that} $\Gamma$, $R(\la,A)$, and $D_\la$ are positive operators. Thus, the sequence $(\Gamma D_{n}v)$ is decreasing monotonically. Assumption \ref{Assp2}-(iv) further implies that $(\Gamma D_{n}v)$ decreases monotonically and weakly converges to $0$. Therefore, by Dini’s theorem for Banach lattices (see, e.g., \cite[Prop. 10.9]{BFR}), we have 
	     \begin{eqnarray}\label{regularity}
	     	\lim_{n\to +\infty } \Vert \Gamma D_{n}v\Vert_V=0,
	     \end{eqnarray}
	     for all $v\in V_+$ and hence for all $v\in V$. Thus, using the identity
	     \begin{eqnarray*}
	     	n\Gamma R(n,A) D_\la =\frac{1}{1-\tfrac{\la}{n}}\left(\Gamma D_\la-\Gamma D_n\right),\quad (\rho(A)\ni\la\neq n)
	     \end{eqnarray*}
	     together with \eqref{regularity}, we obtain
	     \begin{eqnarray}\label{nGamma}
	     	\lim_{n \to +\infty} \Vert n \Gamma R(n,A) D_\la v-\Gamma D_\la v \Vert_{V}= 0, \quad \forall v\in V.
	     \end{eqnarray}
	     
	     Now, from the decomposition \eqref{decomposition}, for any $z \in {D}(\tilde{A})$ there exist $x\in {D}(A)$ and $v \in V$ such that $z=x+D_\lambda v$ for some fixed $\lambda\in \rho(A)$. Therefore,
	     \begin{align*}
	     	\Vert \Gamma_n z-\Gamma z\Vert_{V}&\le \Vert \Gamma_n x-\Gamma x\Vert_{V}+\Vert \Gamma_n D_{\la} v-\Gamma D_\la v\Vert_{V}\\
	     	&\le \Vert \Gamma\Vert_{\calL({D}(\tilde{A}),V)} \Vert n R(n,A)x- x\Vert_{{D}(\tilde{A})}+ \Vert n \Gamma R(n,A) D_\la v-\Gamma D_\la v\Vert_{V}\\
	     	&\le \Vert \Gamma\Vert_{\calL({D}(\tilde{A}),V)} \Vert n R(n,A)x- x\Vert_{X_1}+\Vert n \Gamma R(n,A) D_\la v-\Gamma D_\la v\Vert_{V}.
	     \end{align*}
	     Taking the limit as $n \to +\infty$ and applying \eqref{Eq1} and \eqref{nGamma}, we obtain \eqref{S5.9"}.
	     \vspace{.1cm}
	     
	     \underline{\bf Step 2.} We prove the estimate \eqref{Estimate-wellposed}.\\ 
	     
	     For $t \ge 0$, let $\Phi_t^A\in \calL({L}^p(\mathbb{R}_+;V),X_{-1}^A)$ be the input-map of $(A,B)$, defined for every $v \in {L}^p(\mathbb{R}_+;V)$ by
	     \begin{eqnarray*}
	     	\Phi_t^A v:=\int_{0}^{t} T_{-1}(t-s)B v(s) {\mathrm{d}s}.
	     \end{eqnarray*}
	     Since $T$ is positive and $D_\la$ is positive for all $\la>s(A)$, it follows from \cite[Prop. 4.3]{SGPS} and \cite[Lem. 2.1]{elgantouh2024admissibility} that $\Phi_t^A v \in X_{-1,+}^A$ for all $t \ge 0$ and $v \in {L}^p_+(\mathbb{R}_+;V)$. Using Assumption \ref{Assp2}-(iii) and Lemma \ref{lemApp}, we obtain $\Phi_t^A v\in X_+$ for all $v \in {L}^1_+(\mathbb{R}_+; V)$ and $t\ge 0$. Therefore, by Hölder’s inequality, $B$ is an $L^p$-admissible positive control operator for $A$.
	     
	     Now, let $\tau > 0$, $v \in {W}^{1,p}_{0,+}(0,\tau; V)$, and let $n, m \in \mathbb{N}$ be sufficiently large. Using Assumption \ref{Assp2}-(ii), Hölder's inequality, and Fubini's theorem, we obtain
	      \begin{align*}
	     		\left\Vert \Gamma_n mR(m,A)\Phi_\cdot^A v \right\Vert_{{L}^{p}(0,\t;V)}^{p}
	     		=&\int_0^\t \left\Vert C nR(n,A)mR(m,A)\int_0^t T_{-1}(t-s)B v(s)\ {\mathrm{d}s}\right\Vert^{p}\ \mathrm{d}t \\
	     		=&\int_0^\t \left\Vert \int_0^t CT(t-s)mR(m,A)nD_nv(s)\ {\mathrm{d}s}\right\Vert^{p} \mathrm{d}t \\
	     		\le &\int_0^\t t^{p-1}\int_0^t\left\Vert CT(t-s)mR(m,A)nD_nv(s)\right\Vert^p{\mathrm{d}s}\ \mathrm{d}t\\
	     		\le &\t^{p-1}\int_0^\t \int_0^\t \left\Vert CT(t)mR(m,A)nD_nv(s)\right\Vert^p\mathrm{d}t\ {\mathrm{d}s}\\
	     		\le &\gamma_\t^p\t^{p-1}\int_0^\t \left\Vert mR(m,A)nD_nv(s)\right\Vert^p\ {\mathrm{d}s},
	     \end{align*}
     where $C:=\Gamma\vert_{{D}(A)}$. Passing to the limit as $m \to \infty$, using Fatou's lemma, and taking into account \eqref{Eq2} and Assumption~\ref{Assp2}-(iii), we obtain
	     \begin{align*}
	     	\left\Vert \Gamma_n \Phi_\cdot^A v \right\Vert_{L^{p}(0,\t;U)}^{p}	\le \kappa_0^p \gamma(\t)^p\t^{p-1}\Vert v\Vert_{L^{p}(0,\t;V)}^p,
	     \end{align*}
	     where $\kappa_0=\sup_{n\ge N} n\|D_n\|$. By a similar argument as in \eqref{phi}, we have 
	     \begin{align*}
	     	\Phi_t^A v=(-A)^{-1}B v(t)-(-A)^{-1}\Phi_t^A\dot{v}\in {D}(\tilde{A}),
	     \end{align*}
	     for all $v\in {W}^{1,p}_{0}(0,\t;V)$. Using Fatou's lemma and \eqref{S5.9"}, we obtain
	     \begin{eqnarray}\label{Estimate-C}
	     	\left\| \Gamma \Phi_\cdot^A v \right\|_{{L}^{p}(0,\tau; V)}^p \le \eta_\tau^p \| v \|_{{L}^{p}(0,\tau; V)}^p, \; \forall v \in {W}^{1,p}_{0,+}(0,\tau; V),
	     \end{eqnarray}
	     where $\eta_\tau^p := \kappa_0^p \gamma_\tau^p \tau^{p-1}$ and $\lim_{\tau \to 0} \eta_\tau = 0$ for any $p > 1$.
	     \vspace{.1cm}
	     
	     Finally, using the density of ${W}^{1,p}_{0,+}(0,\tau;V)$ in ${L}^p_+(0,\tau; V)$ and the fact that the positive cone ${L}^p_+(0,\tau; V)$ is generating, we obtain the estimate \eqref{Estimate-wellposed}. This concludes the proof. 
	     
	     \subsection{Proof of Theorem \ref{Main1}}
	     Using \eqref{representation}, the initial/boundary-value control problem \eqref{Main-system}
	     can be reformulated as the following system
	     \begin{align}\label{distributed}
	     	\begin{cases}
	     		\dot{z}(t) =(A_{-1}+B\Gamma) z(t)+B Ku(t),& t> 0,\\
	     		{z(0)=z_0}.&
	     	\end{cases}
	     \end{align}
	     To prove the well-posedness of \eqref{distributed}, we first establish that the operator $\check{\calA} : {D}(\check{\calA}) \to X$ defined by
	     \begin{align}\label{Def.checkA}
	     	\check{\calA} x := (A_{-1} + B \Gamma)x, \qquad x \in {D}(\check{\calA}) := \left\{x \in {D}(\tilde{A}) : (A_{-1} + B \Gamma)x \in X\right\},
	     \end{align}
	     generates a $C_0$-semigroup $\calT$ on $X$. To this ends, we shall use {Proposition~\ref{theorem}}. In fact, let $C:=\Gamma\vert_{{D}(A)}$. According to Definition \ref{Def.triple}, it follows {from} Theorem \ref{S5.P1} that $(A,B,C)$ is a positive ${L}^p$-well-posed triple on $V,X,V$. 
	     \vspace{.1cm}
	     
	     Let us now establish the regularity of $(A,B,C)$. By \cite[Cor. 3.5]{StaffansWeiss2002}, the transfer function satisfies $H(\la)=\Gamma D_\la$ for all ${\mathrm{R}\mathrm{e}( \lambda )}>\omega_0(T)$. Thus, using \eqref{regularity}, we obtain 
	     \begin{align}\label{Limit-transfer}
	     	\lim_{\la \to +\infty}\Vert H(\la)v\Vert_V=0, \qquad \forall v\in V.
	     \end{align} 
	     Hence, according to Definition \ref{Definition-regularity}, $(A,B,C)$ is a positive ${L}^p$-well-posed strongly regular triple with zero feedthrough. 
	     
	     Now, we define the operator 
	     \begin{eqnarray*}
	     	(\mathcal{F}^Av)(s):=\Gamma \Phi_s^A v, \quad \forall s\in [0,t],\ v\in  {W}^{1,p}_0(0,t;V).
	     \end{eqnarray*}
	     It follows from \eqref{Estimate-wellposed} that the operator $\mathcal{F}^A$ extends uniquely to a linear bounded positive operator (still denoted by $\mathcal{F}^A$) from ${L}^{p}(0,\t;V)$ to itself for each $\t> 0$. Since the triple  $(A,B,C)$ is strongly regular, it follow from Remark \ref{Yosida-extension} that this extension is given by
	     \begin{align*}
	     	(\mathcal{F}^A v)(t)=C_\Lambda \Phi_t^A v,  
	     \end{align*}
	     for all $v\in {L}^p_{loc}(0,\tau;V)$ and a.e. $t\ge 0$. Furthermore, by \eqref{Estimate-wellposed}, one has 
	     \begin{eqnarray*}
	     	\left\Vert \mathcal{F}^Av \right\Vert_{{L}^{p}(0,\t;V)}\leq \eta_\t \Vert v\Vert_{{L}^{p}(0,\t;V)},\quad \forall v\in {L}^{p}(0,\t;V),
	     \end{eqnarray*}
	     where $\lim_{\tau\to 0}\eta_\tau=0$ for any $p>1$. Therefore,  
	     \begin{align*}
	     	\lim_{\t\to 0}\Vert \mathcal{F}^A\Vert_{\calL({L}^{p}(0,\t;V))}=0,
	     \end{align*}
	     and thus there exists $\t_0>0$ small enough such that $\Vert \mathcal{F}^A\Vert_{\calL({L}^{p}(0,\t_0;V))} <1$. 
	     \vspace{.1cm}
	     
	     Applying Proposition \ref{theorem} together with \cite[Thm. A.1]{El3}, we obtain that the operator $\mathcal{A}$ defined in \eqref{def:A} generates a positive $C_0$-semigroup $T^{cl}$ on $X$ given by \eqref{variation}. By an argument similar to the proof of \cite[Thm. 4.1]{HMR}, one can show that $\check{\mathcal{A}}=\mathcal{A}$; hence $\check{\mathcal{A}}$ generates a positive $C_0$-semigroup $\mathcal{T}$ on $X$ given by $\mathcal{T}=T^{cl}$. Moreover, by {Proposition \ref{theorem}}, $(\mathcal{A},B,C_\Lambda)$ is a positive $L^p$-well-posed strongly regular triple. Therefore, system \eqref{distributed} (and hence \eqref{Main-system}) is equivalent to the following system
	     \begin{eqnarray*}
	     	\dot{z}(t) =\calA_{-1} z(t)+\calB u(t),\quad t> 0,\qquad
	     	{z(0)=z_0},
	     \end{eqnarray*}
	     where $\calB:=B K\in \calL(U,W_{-1})$. Since $K\in \calL(U,V)$, it follows from \cite[Prop. 2.5]{JaNPS} that for every ${z_0}\in X$ and every $u\in {L}^q_{loc}(\R_+,U)$ (with $q\ge p$), the initial/boundary-value control problem \eqref{Main-system} admits a unique solution $z(\cdot)\in { C}(\R_+;X)$ given by \eqref{variation1}. Moreover, for every $\tau>0$ there exists {$c_{\tau,q}>0$} such that
	     \begin{align}\label{CD}
	     	\Vert z(\t)\Vert \le c_{\tau,q}\left(\Vert {z_0}\Vert +\Vert u\Vert_{{L}^q(0,\t;U)} \right),
	     \end{align}
	     for all ${z_0}\in X$ and $u\in {L}^q_{loc}(\R_+,U)$. This completes the proof.

	     \begin{remark}
	     We note that the exponent $q$ is chosen such that $q \ge p$, where $p$ is the exponent of the ${L}^p$-well-posedness of the triple $(A,B,C)$. In particular, for $q=\infty$,  the estimate \eqref{CD} implies that the solution of the system \eqref{Main-system} depends continuously on the initial data $x$ and the input $u$.
	     \end{remark}

        \section{Application: ISS of coupled heat equations with delays and boundary disturbances}\label{Sec:5}
        As a concrete application, we analyze the exponential ISS of a system of three coupled heat equations. In this model, the boundary temperature of each component is fed, after a time delay, into the boundary heat flux of the next component, subject to boundary disturbances. More precisely, we consider the following system (see Figure \ref{fig1}) for $j\in \{1,2,3\}$:
         \begin{align}\label{Heat-equation}
        		\begin{cases}
        			\dot{z}_j(t,x)= a_j\partial_{xx}z_j(t,x)-b_j z_j(t,x),&t\geq 0, \;x\in (0,1),\cr  
        			z_j(0,x)= f_j(x), & x\in (0,1),\\
        			a_j\partial_x z_j(t,1)= c_j z_j(t-r_j,0)+ w_j(t), & t\geq 0,\\
        			\partial_x z_j(t,0)=0,&\\
        			z_j(\theta,0)=\varphi_j,& \theta\in [-r_j,0],
        		\end{cases}
        \end{align}
       where $a_j,b_j,c_j$ are positive constants. Here, $z_j(t,x)$ denotes the temperature distribution in the $j$-th component, $r_j\in (0,\infty)$ are transmission delays, $w_j$ are locally essentially bounded boundary disturbances, and $f_j,\varphi_j$ are given initial conditions.
        
        \begin{figure}[h]
        	\begin{center}
        		\begin{tikzpicture}[scale=0.5]
        			\node[element] (M) at (-15,0) {\bf Heat 1};
        			\node[element] (N) at (-9,0) {\bf Heat 2};
        			\node[element] (O) at (-3,0) {\bf Heat 3};
        			\node[fleche1] (P) at (-6,-2) {$\textcolor{red}{w_3(t)}$};
        			\node[fleche1] (T) at (-12,-2) {$\textcolor{red}{w_2(t)}$};
        			\draw[fleche] (M) -- (N) node[midway, above]{\small $z_1(0,t)$};
        			\draw[fleche] (N) -- (O) node[midway, above]{\small $z_2(0,t)$};
        			\draw[fleche1] (P) -- (-6,-0.1) node[midway, above]{};
        			\draw[fleche1] (T) -- (-12,-0.1) node[midway, above]{};
        			\draw[fleche1] (-18,0) -- (M) node[midway, above]{$\textcolor{red}{w_1(t)}$};
        		\end{tikzpicture}
        	\end{center}
        	\caption{Coupled heat equations with boundary disturbances.}\label{fig1}
        \end{figure}
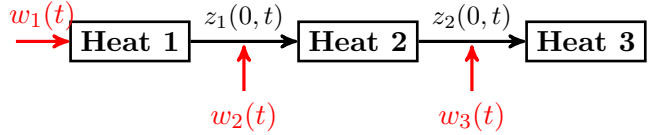
        
        In what follows, we investigate the well-posedness and stability of the system \eqref{Heat-equation} within the framework of ISS. To this end, we first reformulate the system as an initial/boundary-value control problem. In fact, consider the Banach space
        \begin{align*}
        	X:= \left({L}^1(0,1;\R)\right)^3,\qquad \Vert f\Vert_{X}:=\sum_{j=1}^{3}\Vert f_j\Vert_{{L}^1(0,1)}.
        \end{align*}
        Note that ${L}^1$ is the natural space for system \eqref{Heat-equation}, since the ${L}^1$-norm corresponds to the total heat flux.
        
        On $X$, we define the differential operator
        \begin{align*}
        	&\tilde{A}_h f:= \mathsf{a}\partial_{xx} f- \mathsf{b} f, \\
        	& f\in {D}(\tilde{A}_h):= \left\{f\in \left({W}^{2,1}(0,1)\right)^3: \partial_x f(0)=0\right\}\nonumber,
        \end{align*}
        where $\mathsf{a}:=\operatorname{diag}(a_j)_{j=1}^{j=3}$, $\mathsf{b}:=\operatorname{diag}(b_j)_{j=1}^{j=3}$.
        \vspace{.1cm}
        
        Next, let $\check{z}_j$ denote the boundary values of the heat distribution $z_j$ at the endpoints $0$, i.e., 
        \begin{eqnarray*}
        	\check{z}_j(t):=z_j(t,0), \qquad t\ge 0,\ j=1,2,3.
        \end{eqnarray*}
        For each $t\ge 0$, we consider the history segment
        \begin{eqnarray*}
        	\check{z}^t_j:[-r_{j},0] \ni \theta\to \R,\; \check{z}^t_j(\theta):=\check{z}_j(t+\theta), \quad j=1,2,3.
        \end{eqnarray*}
        
        We now introduce the Banach space 
        \begin{eqnarray*}
        	Y:=\prod_{j=1}^3{L}^1(-r_j,0;\R), \quad \Vert \varphi\Vert_{Y}:=\sum_{j=1}^{3}\Vert \varphi_j\Vert_{{L}^1(-r_j,0)}. 
        \end{eqnarray*}
        On $Y$, we define the differential operator
        \begin{eqnarray*}
        	\tilde{A}_\theta \varphi:=  \partial_\theta \varphi, \quad
        	\varphi\in  {D}(\tilde{A}_{\theta}):= \prod_{j=1}^3{W}^{1,1}(-r_j,0,\R).
        \end{eqnarray*}
        Recall that the restriction
        \begin{eqnarray*}
        	A:=	\tilde{A}_\theta, \quad {D}(A_{\theta}):=\left\{\varphi\in  {D}(\tilde{A}_{\theta}): \varphi(0)=0\right\}
        \end{eqnarray*}
        generates a positive $C_0$-semigroup $T_\theta:=(T_\theta(t))_{t\ge 0}$ on $Y$, given by 
        \begin{align*}
        	(T_\theta(t)\varphi)_j(\theta)=\begin{cases} 0,& -t\le \theta\le 0,\cr \varphi_j(t+\theta),& -r_j\le \theta\le -t,\end{cases}
        \end{align*}
        for all $j\in \{1,2,3\}$, $t\ge 0$, $\varphi\in Y$, and a.e. $\theta\in [-\max_j r_j,0]$.
        \vspace{.1cm}
        
        Following \cite{El3}, we introduce the new state variable
        \begin{align*}
        	\zeta(t)=\left((z_j(t,\cdot))_{j=1}^{j=3}, (\check{z}^{t}_j(\cdot))_{j=1}^{j=3}\right)^{\top},\quad t\ge 0.
        \end{align*}
        System \eqref{Heat-equation} can then be reformulated on $X \times Y$ as
        \begin{eqnarray}\label{Main-system1}
        	\begin{cases}
        		\dot{\zeta}(t)= \tilde{A}\zeta (t),& t> 0,\\
        		G \zeta(t)=\Gamma \zeta(t)+Ku(t), & t> 0,
        	\end{cases}
        \end{eqnarray}
        with initial condition $\zeta(0)=(f,\varphi)^{\top}$ and input function $u(t)=(w_j(t))^{j=3}_{j=1}$, where $f:=(f_j)_{j=1}^{j=3}$ and $\varphi:=(\varphi_j)_{j=1}^{j=3}$. Here, the operators $\tilde{A},G,\Gamma$ are defined by  
        \begin{eqnarray}\label{G,Ga}
        	\begin{array}{lll}
        		&\tilde{A}(f,\varphi)^{\top}:=(\tilde{A}_h f,\tilde{A}_\theta\varphi)^\top,\\
        		& G(f,\varphi)^{\top}:=\left((a_j\partial_{x}f_j(1))_{j=1}^{j=3},\varphi(0)\right)^{\top}, \\
        		&\Gamma(f,\varphi)^{\top}:=\left((\varphi_j(-r_j))_{j=1}^{j=3},(c_jf_j(0))_{j=1}^{j=3}\right)^{\top},
        	\end{array}
        \end{eqnarray}
        for all $(f,\varphi)^{\top}\in {D}(\tilde{A}):={D}(\tilde{A}_h)\times {D}(\tilde{A}_\theta)$, and 
        $$K:= \begin{pmatrix}
        	I\\0
        \end{pmatrix}.$$
        
        In view of \cite[Def. 2.7]{JaNPS} and \cite[Def. 3.17]{Andri}, we state the following definition of exponential ISS.
        \begin{definition}\label{ISS-def}
        	\begin{enumerate}
        		\item We say that \eqref{Heat-equation} is \emph{exponentially {input-to-sate stable (ISS)} with linear gain} if there exist constants $M,\omega,\kappa>0$ such that for every $t\ge 0$, $f\in X$, $\varphi\in Y$, and $u\in {L}^\infty(\R_+; \R^3)$,
        		\begin{enumerate}
        			\item $z\in { C}(\R_+;X)$, and
        			\item {\small $\Vert z(t)\Vert_{X} \le M e^{-\omega t}\left(\Vert f\Vert_{X} + \Vert \varphi\Vert_Y\right) + \kappa \Vert u\Vert_{{L}^\infty(\mathbb{R}_+; \mathbb{R}^3)}$}.
        		\end{enumerate}
        		
        		\item We say that \eqref{Main-system1} is \emph{exponentially ISS with linear gain} if there exist constants $M,\omega,\kappa > 0$ such that for every $t \ge 0$, $f \in X$, $\varphi \in Y$, and $u \in {L}^\infty(\R_+;\R^3)$,
        		\begin{enumerate}
        			\item $\zeta\in { C}(\R_+;X\times Y)$, and
        			\item {\small $\Vert \zeta(t)\Vert_{X \times Y} \le M e^{-\omega t} \Vert (f, \varphi)^{\top}\Vert_{X \times Y} + \kappa \Vert u\Vert_{{L}^\infty(\mathbb{R}_+; \mathbb{R}^3)}$}.
        		\end{enumerate}
        	\end{enumerate}
        \end{definition}
        
        \begin{remark}\label{Comparaison}
        	It is clear from the above definition that if \eqref{Main-system1} is exponentially ISS, then the system of heat equations \eqref{Heat-equation} is also exponentially ISS.
        \end{remark}
        \vspace{.1cm}
        
        The well-posedness of the system \eqref{Heat-equation} follows from the following result.
        \begin{proposition}\label{Main2}
        	Let $a_j,b_j,c_j>0$ for all $j\in \{1,2,3\}$. Then, for all $f\in X$, $\varphi\in Y$, and $u\in {L}^{\infty}_{loc}(\R_+;\R^3)$, the system of heat equations \eqref{Heat-equation} has a unique mild solution $z(\cdot)\in { C}(\R_+;X)$. 
        \end{proposition}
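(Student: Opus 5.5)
The plan is to apply Theorem~\ref{Main1} to the reformulation \eqref{Main-system1} on the product lattice $\mathcal{X}:=X\times Y$, with $V:=\R^3\times\R^3$, $U:=\R^3$, $p=1$ and $q=\infty$. Theorem~\ref{Main1} then gives a unique mild solution $\zeta\in C(\R_+;X\times Y)$ for every $(f,\varphi)$ and every $u\in L^\infty_{loc}$. Its first component is $z$, which yields Proposition~\ref{Main2}, as in Remark~\ref{Comparaison}. The work is therefore to verify Assumptions~\ref{Assp1}--\ref{Assp2} for $\tilde A, G, \Gamma$ given in \eqref{G,Ga}.

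For Assumption~\ref{Assp1}, note that $\tilde A|_{\ker G}$ is diagonal. Its first block is the Neumann heat operator $\mathsf a\partial_{xx}-\mathsf b$ on $(L^1)^3$ with $\partial_xf(0)=\partial_xf(1)=0$. This block generates a positive (analytic) $C_0$-semigroup $T_h$ on $L^1$, which is classical. The second block is the left-shift generator $A_\theta$, which generates $T_\theta$. Surjectivity of $G$ is immediate: take polynomials $f_j$ with prescribed $\partial_xf_j(1)$ and $\partial_xf_j(0)=0$, and constants $\varphi_j$.

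For Assumption~\ref{Assp2}-(i) and (iii), I would compute $D_\lambda$ explicitly. For $(v,w)\in V$, the heat part is $f_j(x)=v_j\cosh(\mu_jx)/(a_j\mu_j\sinh\mu_j)$ with $\mu_j=\sqrt{(\lambda+b_j)/a_j}$. The delay part is $\varphi_j(\theta)=e^{\lambda\theta}w_j$. Both are positive for $\lambda>0$, which gives (i). For (iii), $\|\lambda e^{\lambda\cdot}\|_{L^1(-r_j,0)}\le1$. Also
\[
\lambda\|f_j\|_{L^1}=\lambda|v_j|/(a_j\mu_j^2)=\lambda|v_j|/(\lambda+b_j)\le |v_j|,
\]
since $\int_0^1\cosh(\mu x)\,dx=\sinh\mu/\mu$. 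This is exactly the nonreflexive $L^1$ phenomenon of Remark~\ref{Further-details}.

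For (iv), we have $\Gamma D_\lambda(v,w)=\bigl((e^{-\lambda r_j}w_j)_j,\,(c_jv_j/(a_j\mu_j\sinh\mu_j))_j\bigr)\to0$, even in norm. Positivity of $\Gamma$ in (ii) holds because point evaluations preserve positivity (on $W^{2,1}$ and $W^{1,1}$ functions, which are continuous) and $c_j>0$.

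The main obstacle is the admissibility estimate in (ii) with $p=1$. For the delay part I need $\int_0^\alpha|(T_\theta(t)\varphi)_j(-r_j)|\,dt\le\|\varphi_j\|_{L^1}$ for $\alpha\le\min r_j$. This holds because $(T_\theta(t)\varphi)_j(-r_j)=\varphi_j(t-r_j)$.

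For the heat part I need
\[
\int_0^\alpha |(T_h(t)f)_j(0)|\,dt\le\gamma\|f_j\|_{L^1}
\]
for $f_j\ge0$. I would use the Neumann heat kernel $k_t(x,y)$, which is positive: $(T_h(t)f)(0)=\int_0^1k_t(0,y)f(y)\,dy$. So it suffices that $\sup_y\int_0^\alpha k_t(0,y)\,dt<\infty$. By reflection, $k_t$ is dominated by a constant times the whole-line Gaussian sum $\sum_n g_t(2n\pm y)$. Its time integral is $\int_0^\alpha t^{-1/2}\,dt<\infty$ uniformly in $y$, up to the $e^{-b_jt}$ factor. Alternatively, one can argue by duality: integrating the kernel in $t$ gives the resolvent kernel at $0$, which is bounded by the explicit formula $\cosh(\mu(1-y))/(a\mu\sinh\mu)$.

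With all assumptions verified, Theorem~\ref{Main1} applies and the proof is complete.
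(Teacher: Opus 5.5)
Your proposal is correct and follows the paper's proof essentially step by step: you apply Theorem~\ref{Main1} with $p=1$, $q=\infty$, use the explicit $D_\lambda=\operatorname{diag}(D^h_\lambda,\varepsilon_\lambda)$ to get positivity, the bound $\lambda\|D^h_\lambda d\|\le \frac{\lambda}{\lambda+\min_j b_j}\|d\|$ and $\Gamma D_\lambda\to 0$ in norm, and then read off $z$ as the first component of $\zeta$. The only differences are in how two sub-steps are justified: you verify the $L^1$-admissibility estimate of Assumption~\ref{Assp2}-(ii) by hand (the shift formula for the delay block, and the Neumann heat kernel or the Green's function $\cosh(\mu(1-y))/(a\mu\sinh\mu)$ for the heat block), where the paper cites \cite[Lem. 3.2]{El}; and you take generation by the Neumann heat operator as classical, where the paper proves it in Lemma~\ref{Tech1}.
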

        \begin{proof}
        	We apply Theorem \ref{Main1}. To this end, we associate with system \eqref{Main-system1} the operator
        	\begin{align}\label{Big-A}
        		\calA(f,\varphi)^{\top}:=\tilde{A}(f,\varphi)^{\top}, \qquad
        		D(\calA):=\left\{(f,\varphi)^{\top}\in {D}(\tilde{A}): G(f,\varphi)^{\top}=\Gamma(f,\varphi)^{\top}\right\},
        	\end{align}
        	where the operators $\tilde{A},G,\Gamma$ are defined in \eqref{G,Ga}. 
        	\vspace{.1cm}
        	
        	Let $A:=\tilde{A}\vert_{\ker G}$. By Lemma \ref{Tech1}, the operator
        	\begin{align*}
        		A=\operatorname{diag}(A_h ,A_\theta),\quad {D}(A)={D}(A_h)\times {D}(A_\theta)
        	\end{align*}
        	generates a positive $C_0$-semigroup $T:=(T(t))_{t\ge 0}$ on $X\times Y$ given by $T(t)=\operatorname{diag}(T_h(t),T_\theta(t))$ for all $t\ge 0$. Moreover, we have $G:=\operatorname{diag}(G_h,\delta_0)$ with $G_h$ being the operator defined in Lemma \ref{Tech2} and $\delta_0:D(\tilde{A}_{\theta})\to \R^3$ is the Dirac mass at $0$ defined by $\delta_0 \varphi=\varphi(0)$. By Lemma \ref{Tech2}, $G$ is surjective. A direct computation using Lemma \ref{Tech2} shows that the Dirichlet operator associated with $(\tilde{A},G)$ is given by
        	\begin{align*}
        		D_\lambda=\operatorname{diag}(D^h_\lambda,\varepsilon_\la), \qquad \forall \lambda\in \rho(A),
        	\end{align*}
        	where $D^h_\lambda$ is given by \eqref{Dirichlet-heat} and $\varepsilon_\lambda:\R^3\to Y$ is defined by
        	\begin{eqnarray*}
        		(\varepsilon_\la v)(\theta):=e^{\la \theta}v, \quad v\in \R^3,\ \la \in \C,\ \theta\in [-\max_j r_j,0].
        	\end{eqnarray*}
        	Clearly, $D_\la$ is positive for all sufficiently large $\la\in \R$. In addition, for all for all $d,v\in \R^3_+$ and $\lambda>0$,
        	\begin{align*}
        		\Vert \la D_\lambda (d,v)^\top\Vert_{X\times Y}\le \frac{\lambda}{\min_j b_j+\lambda}\Vert d\Vert_{\R^3}+ \Vert v\Vert_{\R^3}.
        	\end{align*}
        	Thus,
        	\begin{eqnarray*}
        		\sup_{\la\ge \la_0}\Vert \la D_\la \Vert_{\calL(\R^6,X\times Y)}<\infty, 
        	\end{eqnarray*}
        	for some $\la_0>0$ large enough. On the other hand, the operator $\Gamma$ defined by \eqref{G,Ga} is clearly positive. Since the state-space $X \times Y$ is an ${L}^1$-space, it follows from \cite[Lem. 3.2]{El} that for any $\alpha > 0$,
        	\begin{align*}
        		\int_{0}^{\alpha}\Vert \Gamma T(t)(f,\varphi)^\top \Vert_{\R^6} \mathrm{d}t\leq \gamma_\alpha\Vert (f,\varphi)^\top\Vert_{X\times Y},
        	\end{align*}
        	for all $(f,\varphi)^\top\in {D}_+(A)$ and some $\gamma_\alpha>0$.
        	\vspace{.1cm}
        	
        	Now, for $d,v\in \R^3$, we have 
        	\begin{align*}
        		\Gamma D_\lambda (d,v)^\top=\left((e^{-\lambda r_j}v_j)_{j=1}^{j=3},\left(\frac{c_j d_j}{a_j \mu_j \sinh(\mu_j)}\right)_{j=1}^{j=3}\right)^\top,
        	\end{align*}
        	where we recall that $\mu_j:=\sqrt{\frac{b_j+\lambda}{a_j}}$. Thus, 
        	\begin{align*}
        		\Vert \Gamma D_\lambda (d,v)^\top\Vert_{\R^3\times \R^3}\le \tau(\lambda) \Vert (d,v)^\top\Vert,
        	\end{align*}
        	for all $d,v\in \R^3$ and $\lambda>0$, where 
        	\begin{align*}
        		\tau(\lambda):= \left(\max_j e^{-\lambda r_j}+\max_j\frac{c_j}{a_j \mu_j \sinh(\mu_j)}\right).
        	\end{align*}
        	Hence,
        	\begin{align*}
        		\sup_{\Vert (d,v)^\top\Vert=1}\Vert \Gamma D_\lambda (d,v)^\top\Vert\le \tau(\lambda)\downarrow 0 \text{ as } \lambda\uparrow +\infty.
        	\end{align*}
        	\vspace{.1cm}
        	
        	All assumptions of Theorem \ref{Main1} are satisfied, hence $\mathcal{A}$ generates a positive $C_0$-semigroup $\mathcal{T}:=(\mathcal{T}(t))_{t\ge 0}$ on $X\times Y$. Furthermore, for all $f\in X$, $\varphi\in Y$, and $u\in {L}^{\infty}_{loc}(\R_+;\R^3)$, the initial/boundary-value control problem \eqref{Main-system1} has a unique mild solution $\zeta(\cdot,f,\varphi,u)\in { C}(\R_+;X\times Y)$. Hence, the system \eqref{Heat-equation} has a unique mild solution  $z\in { C}(\R_+;X)$ given by 
        	\begin{align}\label{Explicit}
        		z(t,f,\varphi,u)=\left[\zeta(t,f,\varphi,u)\right]_1
        	\end{align}
        	for all $t \ge 0$, $f \in X$, $\varphi \in Y$, and $u \in L^\infty_{loc}(\R_+; \R^3)$, where $\left[\zeta(t,f,\varphi, u)\right]_1$ denotes the first component of $\zeta(t,f,\varphi,u)$. This completes the proof. 
        \end{proof}
        
        \begin{remark}
        	In view of Definition \ref{Definition-regularity}, it follows from the proof of Proposition \ref{Main2} that the triple $(A, B, C)$ induced from the initial/boundary-value control problem \eqref{Main-system1}--with $B := (\lambda I-A_{-1}) D_\lambda$ and $C := \Gamma\vert_{{D}(A)}$--is a positive  ${L}^1$-well-posed uniformly regular triple on $(\mathbb{R}^3 \times \mathbb{R}^3, X \times Y,\mathbb{R}^3\times\mathbb{R}^3)$. 
        \end{remark}
        
        Now we can characterize the exponential ISS of \eqref{Heat-equation}.
        \begin{theorem}
        	Let $a_j,b_j,c_j>0$ for all $j\in \{1,2,3\}$. If 
        	\begin{align}\label{Condition-ISS}
        		c_j<\sqrt{a_j b_j}\sinh{\left(\sqrt{\frac{b_j}{a_j}}\right)}, \qquad \forall j\in \{1,2,3\},
        	\end{align}
        	then the system of heat equations \eqref{Heat-equation} is exponentially ISS.
        \end{theorem}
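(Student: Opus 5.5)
The plan is to show that the closed-loop positive semigroup $\calT$ from Proposition \ref{Main2} is uniformly exponentially stable on $X\times Y$. Since the input operator $\calB=BK$ is $L^1$-admissible, hence $L^\infty$-admissible, exponential stability then gives exponential ISS of \eqref{Main-system1} with linear gain. Finally, Remark \ref{Comparaison} transfers exponential ISS to \eqref{Heat-equation}.

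\textbf{Step 1: $s(A)<0$.} Here $A=\operatorname{diag}(A_h,A_\theta)$ with $T=\operatorname{diag}(T_h,T_\theta)$. The Neumann heat part satisfies $s(A_h)\le-\min_j b_j<0$, as one sees from the explicit resolvent in Lemma \ref{Tech2} or from a dissipativity estimate in $L^1$. The shift semigroup $T_\theta$ is nilpotent, so $s(A_\theta)=-\infty$. Hence $s(A)<0$, and $D_\lambda=\operatorname{diag}(D^h_\lambda,\varepsilon_\lambda)$ exists and is positive for all real $\lambda\ge 0$.

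\textbf{Step 2: a resolvent formula for $\calA$.} Fix real $\lambda>s(A)$. For $y\in X\times Y$, I will solve $(\lambda-\calA)x=y$ using the decomposition \eqref{decomposition}, writing $x=R(\lambda,A)y+D_\lambda v$. The boundary condition $Gx=\Gamma x$ becomes
\[
v=\Gamma R(\lambda,A)y+\Gamma D_\lambda v .
\]
This is uniquely solvable whenever $1\in\rho(H(\lambda))$, where $H(\lambda)=\Gamma D_\lambda$. In that case
\[
R(\lambda,\calA)=R(\lambda,A)+D_\lambda\bigl(I-H(\lambda)\bigr)^{-1}\Gamma R(\lambda,A).
\]
Here $\Gamma R(\lambda,A)$ is bounded from $X\times Y$ to $\R^6$, since $R(\lambda,A)$ maps into $D(A)$ and $\Gamma$ is bounded on the graph norm. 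I expect this step to be the main technical point: $\Gamma$ is unbounded, so I must check that this formula really gives the resolvent of the generator $\calA$ obtained in Theorem \ref{Main1}, and not merely a right inverse. Injectivity follows from the same computation, because any kernel element has the form $D_\lambda v$ with $v=H(\lambda)v$.

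\textbf{Step 3: computing $H(0)$ and its spectral radius.} From the formula in the proof of Proposition \ref{Main2}, with $\mu_j=\sqrt{b_j/a_j}$ at $\lambda=0$ so that $a_j\mu_j=\sqrt{a_jb_j}$, we get
\[
H(0)(d,v)^\top=\bigl(v,\ \operatorname{diag}(k_j)\,d\bigr)^\top,\qquad k_j:=\frac{c_j}{\sqrt{a_jb_j}\sinh(\sqrt{b_j/a_j})}.
\]
Then $H(0)^2=\operatorname{diag}(\operatorname{diag}(k_j),\operatorname{diag}(k_j))$, so $r(H(0))=\sqrt{\max_j k_j}<1$ by \eqref{Condition-ISS}. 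Moreover, $H(\lambda)$ is positive and entrywise decreasing in $\lambda\ge0$, so $r(H(\lambda))\le r(H(0))<1$. The Neumann series therefore shows that $(I-H(\lambda))^{-1}\ge0$ exists for all $\lambda\ge0$. By Step 2, $[0,\infty)\subset\rho(\calA)$ and $R(0,\calA)\ge 0$.

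\textbf{Step 4: conclusion.} For the positive semigroup $\calT$, $s(\calA)\in\sigma(\calA)$ when $s(\calA)>-\infty$. Hence $[0,\infty)\subset\rho(\calA)$ forces $s(\calA)<0$. On the $L^1$-type lattice $X\times Y$, positivity gives $\omega_0(\calT)=s(\calA)<0$ by the $L^1$ result of Derndinger--Greiner / Weis. Combining this exponential stability with the $L^\infty$-admissibility of $\calB$, \cite[Prop. 2.5]{JaNPS} and the standard ISS characterization for linear systems yield exponential ISS with linear gain for \eqref{Main-system1}. Here $M,\omega$ come from $\calT$, and $\kappa$ comes from $\sup_t\|\Phi^{\calT}_t\|_{\calL(L^\infty,X\times Y)}<\infty$. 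Remark \ref{Comparaison} then gives exponential ISS for \eqref{Heat-equation}.
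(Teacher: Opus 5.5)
Your proof is correct. Its skeleton matches the paper's:
\begin{itemize}
\item reduce exponential ISS of \eqref{Main-system1} to $\omega_0(\calT)<0$ together with $L^\infty$-admissibility of $\calB$, the latter coming from the well-posedness result in Proposition \ref{Main2};
\item use that $s=\omega_0$ for positive semigroups on $L^1$-spaces;
\item compute $\Gamma D_0$ as in \eqref{factorization}, which gives $r(\Gamma D_0)=\max_j\sqrt{k_j}<1$.
\end{itemize}

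The genuine difference is in the key spectral step. The paper cites \cite[Thm.~A.2]{El3} as a black box: $\omega_0(\calT)<0$ if and only if $\omega_0(T)<0$ and $r(\Gamma D_0)<1$. You instead prove the direction you need directly, in three moves:
\begin{itemize}
\item the Greiner-type resolvent formula $R(\lambda,\calA)=R(\lambda,A)+D_\lambda(I-\Gamma D_\lambda)^{-1}\Gamma R(\lambda,A)$, valid for $\lambda\in\rho(A)$ with $1\in\rho(\Gamma D_\lambda)$ (with injectivity checked, so it is the resolvent of the generator $\calA$ from Theorem \ref{Main1});
\item the monotonicity $0\le\Gamma D_\lambda\le\Gamma D_0$ for $\lambda\ge0$, which yields $[0,\infty)\subset\rho(\calA)$;
\item the fact that $s(\calA)\in\sigma(\calA)$ for positive semigroups.
\end{itemize}

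Your route is self-contained and makes the mechanism transparent; it also gives $R(\lambda,\calA)\ge0$ on $[0,\infty)$. The paper's citation is shorter and delivers an equivalence, so \eqref{Condition-ISS} is also necessary for $\omega_0(\calT)<0$. You could recover that converse as well. The map $\lambda\mapsto r(\Gamma D_\lambda)$ is continuous, decreasing and tends to $0$. If $r(\Gamma D_0)\ge1$, some $\lambda_0\ge0$ has $r(\Gamma D_{\lambda_0})=1$. Perron--Frobenius then gives $1\in\sigma(\Gamma D_{\lambda_0})$, so $\lambda_0\in\sigma(\calA)$.

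One small correction: Lemma \ref{Tech2} gives the Dirichlet map, not an explicit resolvent of $A_h$. The bound $s(A_h)\le-\min_j b_j$ should instead be justified by $\|T_h(t)\|\le e^{-\min_j b_j t}$. This follows from the contractivity of $T_h^1$ in Lemma \ref{Tech1} and the commuting diagonal factor $\operatorname{diag}(e^{-b_jt})$.
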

        \begin{proof}
        	First, note that according to the proof of Theorem~\ref{Main1}, the initial/boundary-value control problem \eqref{Main-system1} is equivalent to the system
        	\begin{eqnarray*}
        		\dot{\zeta}(t) =\calA_{-1} z(t)+\calB u(t),\quad t> 0,\qquad
        		\zeta(0)=(f,\varphi)^\top,
        	\end{eqnarray*}
        	where $\mathcal{A}_{-1}$ is the extension of $\mathcal{A}$ with ${D}(\mathcal{A}_{-1}) = X\times Y$, and $\mathcal{B}:\mathbb{R}^3 \to W_{-1}$ is defined by 
        	\begin{align*}
        		\calB:= \begin{pmatrix}
        			-(A_h)_{-1}D_\lambda^h\\ 0
        		\end{pmatrix}.
        	\end{align*}
        	By \cite[Prop. 2.10]{JaNPS} (see also \cite[Thm. 3.18]{Andri}), system \eqref{Main-system1} is exponentially ISS if and only if $\omega_0(\calT)<0$ and $(\calA,\calB)$ is ${L^\infty}$-admissible. Proposition \ref{Main2} yields the $L^\infty$-admissibility, so it remains to show $\omega_0(\mathcal{T}) < 0$. In fact, since $X\times Y$ is ${L^1}$-space, it follows from \cite[Thm. C-IV.1.1]{Nagel} and \cite[Thm. A.2]{El3} that $\omega_0(\calT)<0$ if and only if $\omega_0(T)<0$ and $r(\Gamma D_0)<1$. On the one hand, $T=\operatorname{diag}(T_h,T_\theta)$, and {noting that} $\omega_0(T_\theta)=-\infty$ and Lemma \ref{Tech1}  gives $\omega_0(T_h)< 0$, we obtain $\omega_0(T)<0$. On the other hand, we have
        	\begin{align}\label{factorization}
        		\Gamma D_0&=\begin{pmatrix}
        			0 & I\\
        			M &  0
        		\end{pmatrix},
        	\end{align}
        	where $M:=\operatorname{diag}\left(\frac{c_j }{a_j\mu_j\sinh(\mu_j)}\right)_{j=1}^{j=3}$ with $\mu_j=\sqrt{\frac{b_j}{a_j}}$. Thus, 
        	\begin{align*}
        		r(\Gamma D_0)=\max_j \sqrt{\frac{c_j }{a_j\mu_j\sinh(\mu_j)}}.
        	\end{align*}
        Hence, under the condition \eqref{Condition-ISS}, one has $r(\Gamma D_0)< 1$, and therefore $\omega_0(\calT) < 0$. Thus, according to \cite[Prop. 2.10]{JaNPS}, system \eqref{Main-system1} is exponentially ISS. Finally, Remark \ref{Comparaison} implies that the system of heat equations \eqref{Heat-equation} is exponentially ISS, which completes the proof. 
        \end{proof}

        \section{Conclusion}\label{Sec:6}
        In this paper, we investigated the well‑posedness of a class of boundary control systems whose boundary conditions involve an unbounded linear operator that is not necessarily closed or closable--a structure that arises naturally in PDE models with boundary feedback or coupling. For such systems, we established direct and explicit conditions on the system data guaranteeing continuous dependence of solutions on initial data and $L^p$-inputs for any $p \in [1,\infty]$. A key contribution is a novel boundedness estimate for input/output maps of LTI infinite-dimensional systems with unbounded input and output operators. These results were then applied to investigate the exponential ISS for a system of boundary coupled heat equations on $L^1$-space. In particular, we obtained specific conditions on the coefficients of the heat equations that yield exponential ISS. In future work, we plan to extend this analysis to nonlinear evolution equations.

        \appendix
        \section{}\label{App}
        Here we present technical results used in the proof of Proposition \ref{Main2}.
        
        \begin{lemma}\label{Tech1}
        	Let $a_j,b_j>0$ for all $j\in \{1,2,3\}$. Then, the operator 
        	\begin{align*}
        		 A_h f := (a_j \partial_{xx} f_j-b_j f_j)_{j=1}^{j=3}, \qquad
        		 {D}(A_h) = \{f \in (W^{2,1}(0,1))^3 : \partial_x f(0) = \partial_x f(1) = 0\} 
        	\end{align*}
        	generates a positive $C_0$-semigroup $T_h:= (T_h(t))_{t\ge 0}$ on $X$ such that $\omega_0(T_h) < 0$.
        \end{lemma}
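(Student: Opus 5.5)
The operator $A_h$ is diagonal, so it suffices to treat a single scalar component
\[
A_j g := a_j g'' - b_j g,\qquad {D}(A_j)=\{g\in W^{2,1}(0,1): g'(0)=g'(1)=0\},
\]
on $L^1(0,1)$. The generated semigroup is then $T_h(t)=\operatorname{diag}(T_j(t))$. Positivity and the growth bound pass to the direct sum, since $\omega_0(T_h)=\max_j\omega_0(T_j)$.

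I further split $A_j = a_j\Delta_N - b_j I$, where $\Delta_N g=g''$ carries Neumann conditions on $L^1(0,1)$. Since $-b_jI$ is a bounded perturbation that commutes with everything, $T_j(t)=e^{-b_jt}e^{a_jt\Delta_N}$. It therefore suffices to show the following.

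(a) $\Delta_N$ generates a positive contraction semigroup $S$ on $L^1(0,1)$.

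(b) Given (a), $\|T_j(t)\|\le e^{-b_jt}$, so $\omega_0(T_j)\le -b_j<0$ and hence $\omega_0(T_h)\le-\min_j b_j<0$.

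For (a) I would use the Lumer--Phillips theorem on $L^1$.

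\emph{Dissipativity.} For $g\in{D}(\Delta_N)$, take $\operatorname{sgn} g\in L^\infty$ as the duality element. The goal is $\int_0^1 g''\operatorname{sgn}(g)\,dx\le 0$. I would obtain this by approximating $\operatorname{sgn}$ by smooth odd nondecreasing functions $\phi_\varepsilon$ and integrating by parts:
\[
\int_0^1 g''\phi_\varepsilon(g)\,dx = \bigl[g'\phi_\varepsilon(g)\bigr]_0^1-\int_0^1\phi_\varepsilon'(g)|g'|^2\,dx .
\]
The boundary term vanishes by the Neumann conditions and the integral term is nonnegative, so the left side is $\le 0$. Letting $\varepsilon\to0$ by dominated convergence gives the claim, because $g''\in L^1$ and $|\phi_\varepsilon|\le1$.

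\emph{Range condition.} For $\lambda>0$ and $h\in L^1$, I solve $\lambda g-g''=h$, $g'(0)=g'(1)=0$, explicitly via the Neumann Green's function
\[
k_\lambda(x,y)=\frac{\cosh(\sqrt\lambda\,x_<)\cosh(\sqrt\lambda(1-x_>))}{\sqrt\lambda\,\sinh\sqrt\lambda},
\]
where $x_<=\min(x,y)$ and $x_>=\max(x,y)$. This kernel is positive and bounded, so $g=\int_0^1 k_\lambda(\cdot,y)h(y)\,dy\in W^{2,1}(0,1)$ with the required boundary conditions.

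The explicit formula also shows that $R(\lambda,\Delta_N)$ is a positive operator. Positivity of $S$ then follows from the exponential formula $S(t)=\lim_n (\tfrac nt R(\tfrac nt,\Delta_N))^n$. Density of ${D}(\Delta_N)$ in $L^1$ is immediate, since it contains $C_c^\infty(0,1)$.

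The main obstacle is the dissipativity step in $L^1$. The sign function is not smooth and $g$ may vanish on sets of positive measure, so the regularization argument must be carried out carefully. If that is troublesome, an alternative is to bound $\|\lambda R(\lambda,\Delta_N)\|_{\mathcal{L}(L^1)}\le1$ directly from the kernel. Because $k_\lambda\ge0$, this norm equals $\sup_y\int_0^1\lambda k_\lambda(x,y)\,dx$. By the symmetry of $k_\lambda$, $\int_0^1\lambda k_\lambda(x,y)\,dx=\lambda R(\lambda,\Delta_N)\mathbf 1(y)$, and since $\Delta_N\mathbf 1=0$ this equals $1$. This gives the Hille--Yosida bound, and hence the contraction property, without any sign-function argument.
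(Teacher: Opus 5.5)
Your argument is correct, but it reaches positivity, contractivity and decay differently from the paper.

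The paper proves that the Neumann part $A_h^1=(a_j\partial_{xx})_j$ is \emph{dispersive}. It tests against $\mathbf{1}_{\{f_j>0\}}$ and integrates $\partial_{xx}f_j$ over the components of $\{f_j>0\}$. After checking that $I-A_h^1$ is surjective, it obtains a positive contraction semigroup in one stroke from \cite[Thm.~C-II.1.2]{Nagel}. It then adds $-\mathsf{b}$ via the Trotter product formula. Finally it derives $\omega_0(T_h)<0$ spectrally, from $0\in\rho(A_h)$, positivity (giving $s(A_h)<0$), and the equality $s(A)=\omega_0(T)$ for positive semigroups on $L^1$.

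You instead use that $-b_jI$ is a scalar shift on each component, so $T_j(t)=e^{-b_jt}S(a_jt)$ and no product formula is needed. You obtain contractivity and positivity separately: contractivity from Lumer--Phillips with a regularized signum (or from the kernel identity $\lambda R(\lambda,\Delta_N)\mathbf{1}=\mathbf{1}$), positivity from the positive Green's kernel and the Euler formula. Decay then follows directly from $\|T_j(t)\|\le e^{-b_jt}$.

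What your route buys:
\begin{itemize}
\item It is more elementary and needs no spectral theory of positive semigroups.
\item It gives the explicit rate $\omega_0(T_h)=-\min_j b_j$, with equality since $T_j(t)\mathbf{1}=e^{-b_jt}\mathbf{1}$.
\item The step you flagged as the main obstacle is already rigorous as written, because $\phi_\varepsilon(0)=0$ and $|g''\phi_\varepsilon(g)|\le|g''|$.
\item It avoids bookkeeping the paper's dispersivity argument glosses over: $\{f_j>0\}$ may have countably many components, and on components touching $0$ or $1$ one must use the Neumann condition instead of $f_j=0$.
\end{itemize}

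What the paper's route buys:
\begin{itemize}
\item Positivity and contractivity come out of a single dispersivity estimate.
\item Its spectral criterion does not require the zero-order term to commute with the diffusion. This is why the same tool is reused for the coupled closed-loop semigroup $\mathcal{T}$, where no such factorization exists.
\end{itemize}
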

        \begin{proof}
        	First, we show that $A_h$ generates a positive $C_0$-semigroup $T_h$ on $X$. We split $A_h$ as
        	\begin{eqnarray*}
        		A_h f = (a_j\partial_{xx}f_j)_{j=1}^{j=3} - (b_jf_j)_{j=1}^{j=3}=: A_h^1 + A_h^2.
        	\end{eqnarray*}
        	We shall use \cite[Thm. C-II.1.2]{Nagel}. First, we prove that $A_h^1$ is {dispersive}. In fact, we have 
        	\begin{eqnarray*}
        		{D}(A_h^1)=\left\{ f\in \left({W}^{2,1}(0,1)\right)^3: \partial_x f(0)=\partial_x f(1)=0 \right\}.
        	\end{eqnarray*}
        	For $f \in {D}(A_h^1)$, define for each $j$:
        	\begin{align*}
        		\phi_j(x)= \begin{cases}
        			1,& {\rm if }\; f_j(x)>0,\\
        			0,& {\rm if \, not}.
        		\end{cases}
        	\end{align*}
        	Then $\phi = (\phi_j)_{j=1}^{j=3} \in  \left(L^\infty(0,1)\right)^3=X'$, with $\phi_j \geq 0$,  $\|\phi_j\|_\infty = 1$, and $\langle f,\phi \rangle_{X\times X'}=\Vert f_+\Vert_{X}$. 
        	
        	Now, fix $j$ and let $\Omega_j = \{x\in (0,1): f_j(x)> 0\}$. Since $f_j$ is continuous, $\Omega_j$ is open and can be written as a disjoint union of open intervals $\Omega_j = \bigcup_{k=1}^{m} (\alpha_k, \beta_k) \subset (0,1)$. On each interval $(\alpha_k, \beta_k)$, we have $f_j(x)> 0$, $f_j(\alpha_k)= f_j(\beta_k)=0$, $\partial_x f_j(\alpha_k) \ge 0$, and $ \partial_xf_j(\beta_k) \le 0$. Hence,
        	\begin{align*}
        		\int_{\alpha_k}^{\beta_k} \partial_{xx} f_j(x) {\mathrm{d}x} = \partial_xf_j(\beta_k) -\partial_x f_j(\alpha_k)\le 0,
        	\end{align*}
        	for all $k\in \N$ and $j=1,2,3$. Therefore,
        	\begin{align*}
        		\langle A_h^1 f,\phi \rangle_{X\times X'}&=\sum_{j=1}^{3}a_j\sum_{k\in \N}\int_{\alpha_k}^{\beta_k}  \partial_{xx} f_j(x){\mathrm{d}x}\\
        		& = \sum_{j=1}^3 a_j \sum_{k\in \N} \partial_xf_j(\beta_k) -\partial_x f_j(\alpha_k)\le 0,
        	\end{align*}
        	which shows that $A_h^1$ is {dispersive}. 
        	
        	Next, we show that $(I-A_h^1)$ is surjective. For $g\in X$, we need to find $f\in D(A_h^1)$ such that $(I - A_h^1)f = g$. This reduces to solving for each $j$:
        	\begin{eqnarray*}
        		f_j - a_j \partial_{xx} f_j = g_j, \quad \partial_x f_j(0) =\partial_x f_j(1) = 0.
        	\end{eqnarray*}
        	Let $\gamma_j=\sqrt{\frac{1}{a_j}}$. Then, the solution is given by
        	 	\begin{align*}
        			f_j(x) = \frac{1}{2a_j \gamma_j} \left[ e^{\gamma_j x} \int_x^{1} e^{-\gamma_j s} g_j(s) {\mathrm{d}s} + e^{-\gamma_j x} \int_0^{x} e^{\gamma_j s} g_j(s){\mathrm{d}s} \right] + c_j,
        	\end{align*}
        where $c_j$ is chosen to satisfy the boundary conditions. One can verify that $f_j \in {W}^{2,1}(0,1)$ and satisfies the Neumann conditions. Hence $(I-A_h^1)$ is surjective. By \cite[Thm. C-II.1.2]{Nagel}, $A_h^1$ generates a contraction semigroup $T_h^1$ on $X$. The operator $A_h^2 f :=-(b_j f_j)_{j=1}^{j=3}$ with domain ${D}(A_h^2)=X$ generates the positive multiplication semigroup $T_h^2(t)f = (e^{-b_j t} f_j)_{j=1}^{j=3}$, which is a contraction semigroup since $b_j> 0$. By applying the Trotter product formula (cf. \cite[Corollary III.5.8]{engel2000}), we obtain that $A_h$ generates a $C_0$-semigroup $T_h$ given by 
        	\begin{align*}
        		T_h(t)f=\lim_{n\to +\infty}\left[T_h^1\left(\tfrac{t}{n}\right)T_h^2\left(\tfrac{t}{n}\right)\right]^n f
        	\end{align*} 
        	for all $f\in X$. Since $X_+$ is closed, it follows from the above formula that $T_h$ is also positive. 
        	
        	Finally, we show that $\omega_0(T_h)<0$. According to the proof of \cite[Lem. 5.2]{El2}, for any \(g \in X\) the equation $-A_h f=g$ admits a unique solution $f\in {D}(A_h)$ given componentwise by
        	 \begin{align*}
        			f_j(x)&=\frac{\int_{0}^{1}\sinh\left(\sqrt{\tfrac{b_j}{a_j}}(1-y)\right)a_j^{-1}g_j(y)dy}{-\sqrt{\tfrac{b_j}{a_j}} \sinh\left(\sqrt{\tfrac{b_j}{a_j}}\right)}\cosh{\left(\sqrt{\tfrac{b_j}{a_j}}\right)}\\
        			&+\left(\sqrt{\tfrac{b_j}{a_j}}\right)^{-1}\int_{0}^{x}\sinh\left(\sqrt{\tfrac{b_j}{a_j}}(x-y)\right)a_j^{-1}g_j(y)dy.
        	\end{align*}
        for $x\in (0,1)$ and $j=1,2,3$. Since $T_h$ is positive, then $s(A_h)<0$. Applying the spectral mapping theorem for positive semigroups on ${L}^1$-spaces \cite[Thm. C-IV.1.1]{Nagel}, we conclude that $\omega_0(T_h)< 0$, which completes the proof.
        \end{proof}
        
        \begin{lemma}\label{Tech2}
        	Let $a_j,b_j>0$ for all $j\in \{1,2,3\}$. Let $G_h$ be the operator defined by 
        	\begin{align*}
        		G_h f:= (a_j\partial_{x}f_j(1))_{j=1}^{j=3}, \quad f\in {D}(\tilde{A}_h).
        	\end{align*}
        	Then, the Dirichlet operator $D_\la^h$ associated with $(\tilde{A}_h,G_h)$ is given by 
        	\begin{align}\label{Dirichlet-heat}
        		(D_\lambda^hd)(x)=\dfrac{d_j}{a_j \mu_j \sinh(\mu_j)} \cosh(\mu_j x), 
        	\end{align}
        	for all ${\mathrm{R}\mathrm{e}( \lambda)}>-\bar{b}$, $x\in (0,1)$, and $j\in \{1,2,3\}$, where $\bar{b}=\max_j b_j$ and $\mu_j:=\sqrt{\frac{b_j+\lambda}{a_j}}$.
        	In addition, $D_\la$ is positive for all sufficiently large $\la\in \R$.
        \end{lemma}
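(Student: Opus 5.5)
The Dirichlet operator is obtained by solving, for each component $j$ separately, the abstract elliptic problem $\tilde{A}_h f=\lambda f$, $G_h f=d$. Concretely, I will look for $f_j\in W^{2,1}(0,1)$ satisfying
\begin{align*}
a_j\partial_{xx}f_j-(b_j+\lambda)f_j=0,\qquad \partial_x f_j(0)=0,\qquad a_j\partial_x f_j(1)=d_j .
\end{align*}
With $\mu_j=\sqrt{(b_j+\lambda)/a_j}$, the general solution of the ODE is $\alpha\cosh(\mu_j x)+\beta\sinh(\mu_j x)$. The Neumann condition at $0$ forces $\beta=0$. The condition at $1$ then reads $a_j\alpha\mu_j\sinh(\mu_j)=d_j$, which gives exactly formula \eqref{Dirichlet-heat}. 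For this we need $\mu_j\sinh(\mu_j)\neq 0$. When $\operatorname{Re}\lambda>-\bar b$ (or at least $\operatorname{Re}(\lambda+b_j)>0$), we have $\mu_j\notin i\R$, and since the zeros of $\mu\sinh\mu$ are all in $i\pi\mathbb{Z}$, the denominator is nonzero. The resulting $f_j$ is smooth, so it lies in $D(\tilde{A}_h)$.

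Uniqueness follows from the decomposition \eqref{decomposition}. If $\tilde{A}_h f=\lambda f$ and $G_h f=0$, then $f\in D(A_h)$ with $(\lambda-A_h)f=0$. Since $\lambda\in\rho(A_h)$ (Lemma \ref{Tech1} gives the generator property, and $\lambda$ lies to the right of $s(A_h)$ for real large $\lambda$), we get $f=0$. Hence $D^h_\lambda=(G_h|_{\ker(\lambda-\tilde{A}_h)})^{-1}$ is the map $d\mapsto f$ above. This also shows directly that $G_h$ is surjective, which is what the proof of Proposition \ref{Main2} uses.

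For positivity, take $\lambda\in\R$ with $\lambda>-\min_j b_j$. Then each $\mu_j>0$ is real, so $\sinh(\mu_j)>0$ and $\cosh(\mu_j x)>0$. Consequently $d\in\R^3_+$ implies $D^h_\lambda d\ge 0$ pointwise, i.e. $D^h_\lambda$ is positive for all sufficiently large real $\lambda$.

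The only delicate point is reconciling the stated range $\operatorname{Re}\lambda>-\bar b$ with a nonvanishing denominator for every $j$. I would handle this by working on $\operatorname{Re}\lambda>-\min_j b_j$, where $\operatorname{Re}(b_j+\lambda)>0$ for all $j$ and hence $\mu_j\sinh\mu_j\neq0$. Outside that region, the formula remains valid whenever $\mu_j\sinh\mu_j\neq 0$. Everything else is a routine ODE computation.
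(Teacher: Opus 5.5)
Your proposal is correct and takes essentially the same route as the paper: you solve the ODE explicitly, use the Neumann condition at $0$ to eliminate the $\sinh$ term, fix the coefficient from $a_j\partial_x f_j(1)=d_j$, and read off positivity from $\sinh(\mu_j)>0$ and $\cosh(\mu_j x)>0$ for large real $\lambda$. Your restriction to $\operatorname{Re}\lambda>-\min_j b_j$ is in fact necessary, because the paper's step ``$\lambda>-\bar b$, so that $\lambda+b_j>0$ for all $j$'' fails when the $b_j$ differ (at $\lambda=-b_j$ with $b_j<\bar b$ one gets $\mu_j=0$ and no solution for $d_j\neq 0$), and your separate uniqueness argument via $\rho(A_h)$ is redundant, since the ODE computation already determines the coefficients uniquely wherever $\mu_j\sinh\mu_j\neq 0$.
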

        
        \begin{proof}
        	For $d \in \R^3$, we solve the abstract elliptic problem
        	\begin{align}\label{ADP}
        		(\lambda-\tilde{A}_h)f=0,\qquad G_h f=d.
        	\end{align}
        	Let $\lambda>-\bar{b}$, so that $\lambda+b_j> 0$ for all $j\in \{1,2,3\}$. By direct computation we obtain the problem \eqref{ADP} has a unique solution given by 
        	\begin{align*}
        		f_j(x) = \dfrac{d_j}{a_j \mu_j \sinh(\mu_j)} \cosh(\mu_j x),
        	\end{align*}
        	for $x\in (0,1)$ and $j=1,2,3$. This shows that for any $d\in \mathbb{R}^3$, there exists a function $f\in D(\tilde{A}_h)$ satisfying \eqref{ADP}. Thus $G_h$ is surjective. By definition, the Dirichlet operator is the solution of \eqref{ADP}, and thus \eqref{Dirichlet-heat} holds true.
        	
        	Finally, notice that when $\lambda$ is sufficiently large, $\mu_j> 0$, $\sinh(\mu_j)>0$, and $\cosh(\mu_jx) > 0$ for all $x \in [0,1]$. Thus, if $d_j\geq 0$, then $f_j(x) \geq 0$ for all $x$, so $D_\lambda^h$ maps $\mathbb{R}_+^3$ into $X_+$. This completes the proof.
        \end{proof}


	\bibliographystyle{abbrvnat}
	\bibliography{cas-refs}

@book{CHARALAMBOS,
	author = {Aliprantis, Charalambos D and O. Burkinshaw},
	title = {Positive operators},
	publisher = {Springer},
	address = {Dordrecht, Netherlands},
	year = {2006}
}

@article{SGPS,
author = {Arora, S. and Gl{\"u}ck, J. and Paunonen, L. and Schwenninger, F-L.},
title = {Limit-case admissibility for positive infinite-dimensional systems},
journal = {J. Differ. Equations},
volume = {440},
pages = {113435},
year = {2025},
issn = {0022-0396},
doi = {https://doi.org/10.1016/j.jde.2025.113435}
}

@book{BFR,
	author = {A. B\'{a}tkai and Fijav\v{z}, Marjeta Kramar and A. Rhandi},
	title = {Positive operator semigroups: from finite to infinite dimensions},
	publisher = {Birkh\"{a}user-Verlag},
	address = {Basel},
	year = {2016}
}

@article{BJVW,
	author = {A. B\'{a}tkai and B. Jacob and J. Voigt and J. Wintermayr},
	title = {Perturbation of positive semigroups on {AM}-spaces},
	journal = {Semigroup Forum},
	volume = {96},
	year = {2018},
	pages = {33-347}
}

@article{ChM,
	author = {A. Chen and K. Morris},
	title = {Well-posedness of boundary control systems},
	journal = {SIAM J. Control Optim.},
	volume = {42},
	year = {2003},
	pages = {1244-1265}
}

@incollection{Desch,
	author = {W. Desch and W. Schappacher},
	title = {Some generation results for perturbed semigroups},
	booktitle = {Semigroup Theory and Applications (Proceedings, Trieste, 1987)},
	editor = {Cl\'{e}ment, P. and Invernizzi, S. and Mitidieri, E. and Vrabie, I-I. },
	series = {Lecture Notes in Pure and Appl. Math.},
	volume = {116},
	publisher = {Dekker},
	address = {New York},
	year = {1989},
	pages = {125-152}
}

@article{El2,
	author = {El Gantouh, Y.},
	title = {Boundary approximate controllability under positivity constraints of infinite-dimensional control systems},
	journal = {J. Optim. Theory Appl.},
	volume = {198},
	number = {2},
	year = {2023},
	pages = {449-478}
}

@article{El,
	author = {El Gantouh, Y.},
	title = {Positivity of infinite-dimensional linear systems},
	year = {2023},
	note = {arxiv.org/abs/2208.10617}
}

@article{El3,
  author = {{El Gantouh}, Y. and Liu, Y.},
  title = {Well-posedness and stability of boundary delay equations},
  journal = {Syst. Control Lett.},
  volume = {208},
  pages = {106307},
  year = {2026},
  doi = {10.1016/j.sysconle.2025.106307}
}

@article{elgantouh2024admissibility,
  author    = {El Gantouh, Y. and Liu, Y. and Lu, J. and Cao, J.},
  title     = {Admissibility of control operators for positive semigroups and robustness of input-to-state stability},
  year      = {2025},
  volume    = {arXiv:2503.04097},
  archivePrefix = {arXiv},
  eprint    = {2503.04097}
}

@article{Gr,
	author = {G. Greiner},
	title = {Perturbing the boundary conditions of a generator},
	journal = {Houston J. Math.},
	volume = {13},
	year = {1987},
	pages = {213-229}
}

@article{HMR,
	author = {S. Hadd and R. Manzo and A. Rhandi},
	title = {Unbounded perturbations of the generator domain},
	journal = {Discrete Contin. Dyn. Syst.},
	volume = {35},
	year = {2015},
	pages = {703-723}
}

@article{JaNPS,
	author = {B. Jacob and R. Nabiullin and Partington, Jonathan R and Schwenninger, Felix L},
	title = {Infinite-dimensional input-to-state stability and Orlicz spaces},
	journal = {SIAM J. Control Optim.},
	volume = {56},
	number = {2},
	year = {2018},
	pages = {868-889}
}

@article{JaSZ,
	author = {B. Jacob and Schwenninger, Felix L and H. Zwart},
	title = {On continuity of solutions for parabolic control systems and input-to-state stability},
	journal = {J. Differ. Equations},
	volume = {266},
	number = {10},
	year = {2019},
	pages = {6284-6306}
}

@book{Lasiecka2000,
  author = {Lasiecka, I. and Triggiani, R.},
  title = {Abstract parabolic systems},
  booktitle = {Control Theory for Partial Differential Equations: Continuous and Approximation Theories. {I}},
  series = {Encyclopedia of Mathematics and its Applications},
  volume = {74},
  publisher = {Cambridge University Press},
  address = {Cambridge},
  year = {2000},
  isbn = {978-0-521-43408-3}
}

@book{Nagel,
	editor = {R. Nagel},
	title = {One-Parameter semigroups of positive operators},
	series = {Lecture Notes in Mathematics},
	publisher = {Springer},
	address ={Berlin},
	year = {1986}
}

@incollection{NaS,
	author = {R. Nagel and E. Sinestrari},
	title = {Inhomogeneous volterra integrodifferential equations for {H}ille-{Y}osida operators},
	booktitle = {Functional Analysis, Proceedings Essen, 1991},
	editor = {K.D. Bierstedt and A. Pietsch and W.M. Ruess and D. Vogt},
	series = {Lect. Notes in Pure and Appl. Math.},
	volume = {150},
	publisher = {Dekker},
	address = {New York},
	year = {1994},
	pages = {51-70}
}

@article{Salam,
	author = {D. Salamon},
	title = {Infinite-dimensional linear system with unbounded control and observation: a functional analytic approach},
	journal = {Trans. Am. Math. Soc.},
	volume = {300},
	year = {1987},
	pages = {383-431}
}

@book{Schaf,
	author = {Schaefer, Helmut H},
	title = {Banach lattices and positive operators},
	publisher = {Springer-Verlag},
	address = {Berlin-Heidelberg},
	year = {1974}
}

@book{Sta,
	author = {Staffans, Olof J},
	title = {Well-posed linear systems},
	publisher = {Cambridge University Press},
	address = {Cambridge},
	year = {2005}
}

@book{TW,
	author = {M. Tucsnak and G. Weiss},
	title = {Observation and control for operator semigroups},
	publisher = {Birkh\"{a}user},
	address = {Basel, Boston, Berlin},
	year = {2009}
}

@article{WF,
	author = {G. Weiss},
	title = {Regular linear systems with feedback},
	journal = {Math. Control Signals Syst.},
	volume = {7},
	year = {1994},
	pages = {23-57}
}

@article{CZZ,
	author = {Chen, Q. and Zheng, J. and Zhu, G.},
	title = {Backstepping control of a class of space-time-varying linear parabolic {PDEs} via time invariant kernel functions},
	journal = {SIAM J. Control Optim.},
	volume = {62},
	number = {6},
	year = {2024},
	pages = {2992-3018}
}

@book{engel2000,
	author    = {Klaus-Jochen Engel and Rainer Nagel},
	title     = {One-parameter semigroups for linear evolution equations},
	series    = {Graduate Texts in Mathematics},
	volume    = {194},
	publisher = {Springer-Verlag},
	address   = {New York},
	year      = {2000},
	isbn      = {978-0-387-98463-0}
}

@article{StaffansWeiss2002,
	author = {Staffans, O. and Weiss, G.},
	title = {Transfer functions of regular linear systems. {Part II}: The system operator and the {Lax-Phillips} semigroup},
	journal = {Trans. Amer. Math. Soc.},
	volume = {354},
	pages = {3229--3262},
	year = {2002}
}

@article{Andri,
  author = {Mironchenko, A. and Prieur, C.},
  title = {Input-to-state stability of infinite-dimensional systems: recent results and open questions},
  journal = {SIAM Review},
  volume = {62},
  number = {3},
  pages = {529--614},
  year = {2020},
}

@article{Fattorini1968,
  author = {Fattorini, H. O.},
  title = {Boundary control systems},
  journal = {SIAM Journal on Control},
  volume = {6},
  number = {3},
  pages = {349--385},
  year = {1968},
}

@article{WR,
  author  = {Weiss, G.},
  title   = {Transfer functions of regular linear systems. {Part I}: Characterization of regularity},
  journal = {Trans. Amer. Math. Soc.},
  volume  = {342},
  pages   = {827--854},
  year    = {1994}
}

@book{Jacob2012,
  author = {Jacob, B. and Zwart, H.},
  title = {Linear {P}ort-{H}amiltonian systems on infinite-dimensional spaces},
  series = {Operator Theory: Advances and Applications},
  volume = {223},
  pages = {143},
  year = {2012},
  publisher = {Springer},
  address = {Basel}
}

@book{Curtain1995,
  author = {Curtain, R. F. and Zwart, H.},
  title = {An introduction to infinite-dimensional linear systems theory},
  series = {Texts in Applied Mathematics},
  volume = {21},
  publisher = {Springer-Verlag},
  address = {New York},
  year = {1995}
}

@article{Emirsajlow2000,
  author = {Emirsajlow, Z. and Townley, S.},
  title = {From {PDEs} with boundary control to the abstract state equation with an unbounded input operator: a tutorial},
  journal = {Eur. J. Control},
  volume = {6},
  number = {1},
  pages = {27--53},
  year = {2000}
}

@article{Tucsnak2014,
  author = {Tucsnak, M. and Weiss, G.},
  title = {Well-posed systems--the {LTI} case and beyond},
  journal = {Automatica},
  volume = {50},
  pages = {1757--1779},
  year = {2014}
}

@article{Alessio2026,
	author = {Barbieri, A. and Engel, K. J},
title = {On struSomectured finite–rank perturbations of positive operator semigroups},
journal = {Evol. Equ. Control Theory},
volume = {16},
pages = {114--138},
year = {2026},
doi = {10.3934/eect.2025070},
}

@book{Karafyllis2019a,
  author = {Karafyllis, I. and Krstic, M.},
  title = {Input-to-state stability for {PDEs}},
  series = {Communications and Control Engineering},
  publisher = {Springer},
  year = {2019},
  isbn = {978-3-319-91010-4},
  doi = {10.1007/978-3-319-91011-6},
}

@article{Lhachemi2019,
  author = {Lhachemi, H. and Shorten, R.},
  title = {{ISS} property with respect to boundary disturbances for a class of Riesz-spectral boundary control systems},
  journal = {Automatica},
  volume = {109},
  pages = {108504},
  year = {2019},
  doi = {10.1016/j.automatica.2019.108504},
}

@article{SZZ,
  author = {Sun, X. and Zheng, J. and Zhu, G.},
  title = {Finite-time input-to-state stability for infinite-dimensional systems},
  journal = {Int. J. Robust Nonlinear Control},
  volume = {35},
  pages = {3141--3153},
  year = {2025},
  doi = {10.1002/rnc.7829},
}
	
\end{document}